\documentclass[a4paper,12pt]{amsart}
\usepackage[utf8]{inputenc}
\usepackage{hyperref}
\usepackage{amsmath, amssymb}
\usepackage[alphabetic,nobysame, initials]{amsrefs}
\usepackage{graphicx,color}

\newtheorem{thm}{Theorem}[section]
\newtheorem{lem}[thm]{Lemma}
\newtheorem{cor}[thm]{Corollary}
\newtheorem{conj}[thm]{Conjecture}
\newtheorem{fact}[thm]{Fact}
\theoremstyle{definition}
\newtheorem{defn}[thm]{Definition}
\newtheorem{obs}[thm]{Observation}

\newcommand{\noshow}[1]{}
\newcommand{\Ze}{\mathbb Z}
\renewcommand{\Re}{\mathbb R}
\renewcommand{\epsilon}{\varepsilon}
\newcommand{\Ren}{\Re^n}

\newcommand{\Se}{\mathbb S}
\newcommand{\Sen}{\Se^n}
\newcommand{\Senm}{\Se^{n-1}}
\newcommand{\B}{\mathbf B}

\newcommand{\Pe}{\mathbb P}
\newcommand{\FF}{\mathcal F}
\newcommand{\GG}{\mathcal G}
\newcommand{\HH}{\mathcal H}

\newcommand{\di}{\mathrm d}

\newcommand{\st}{\; : \; }
\renewcommand{\phi}{\varphi}
\DeclareMathOperator{\inter}{int}
\DeclareMathOperator{\cl}{cl}
\DeclareMathOperator{\bd}{bd}

\DeclareMathOperator{\vol}{vol}
\DeclareMathOperator{\conv}{conv}
\DeclareMathOperator{\diam}{diam}

\newcommand{\card}[1]{\left|#1\right|}
\newcommand\facto[2][]{
  \ifx&#1& 
  \refstepcounter{equation}
  \fi
  \begin{minipage}{0.09\textwidth}
  \ifx&#1& 
  (\theequation)
  \else 
  {#1}
  \fi
  \end{minipage}
  \hfill
  \begin{minipage}{0.81\textwidth}
  \emph{
  \begin{sloppypar}
  #2
  \end{sloppypar}
  }
  \end{minipage}
  \smallskip
}

\title{Flavors of Translative Coverings}
\author[M. Nasz\'odi]{M\'arton Nasz\'odi}
\address{
M\'arton Nasz\'odi,
Dept. of Geometry,
Lorand E\"otv\"os University,
P\'azm\'any P\'eter S\'et\'any 1/C
Budapest, Hungary 1117
}
\email{marton.naszodi@math.elte.hu}
\keywords{covering, Rogers' bound, spherical cap, density, set-cover, 
illumination, Borsuk's conjecture, multiple covering, Sudakov's inequality}
\subjclass[2010]{52C17, 05B40, 52A23}
\thanks{The author acknowledges the support of the J\'anos Bolyai 
Research Scholarship of the Hungarian Academy of Sciences, and
the Hung. Nat. Sci. Found. (OTKA) grant PD104744.}

\begin{document}
\begin{abstract}
We survey results on the problem of covering the space $\Ren$, or a convex body 
in it, by translates of a convex body. Our main goal is to present a diverse 
set of methods. A theorem of Rogers is a central result, according to which, 
for any convex body $K$, the space $\Ren$ can be 
covered by translates of $K$ with density around $n\ln n$. We outline four 
approaches to proving this result. Then, we discuss 
the illumination conjecture, decomposability of multiple coverings, Sudakov's 
inequality and some problems concerning coverings by sequences of sets.
\end{abstract}

\maketitle


\section{Introduction}

The problem of covering a set by few translates of another appears naturally in 
several contexts. In computational applications it may be used for divide and 
conquer algorithms, in analysis, it yields $\varepsilon$--nets, in functional 
analysis, it is used to quantify how compact an operator between 
Banach spaces is. In geometry, it is simply an interesting question on its own.

Our primary focus is to describe a representative family of methods, rather 
than giving a complete account of the state of the art.
In particular, we highlight some combinatorial ideas, and sketch some
instructive probabilistic computations.

We minimize overlap with the fundamental works of L. Fejes T\'oth 
\cite{LFT53} and Rogers \cite{RoBook64}.
B\"or\"oczky's book \cite{Bo04} is the most recent source on finite coverings.
Some of the topics covered here are discussed in more detail in the 
books \cites{PaAg95,Be10,Ma02,BMP05}. Many of the topics omitted, or only 
touched upon here (most notably, planar and three--dimensional results, lattice 
coverings and density) are discussed in the surveys 
\cites{FTG04,FTG99,FTG97,FTG93,FTG83} by G. Fejes T\'oth.

In Section~\ref{sec:wholespace}, we state 
Rogers' result, and a few of its relatives, on the existence of an economical 
covering of the whole space by translates of an arbitrary convex body. 
In Section~\ref{sec:rogersproofs}, we outline three probabilistic proofs of 
these results. In Section~\ref{sec:fractional}, we describe a 
fourth approach, which is based on an algorithmic (non--probabilistic) result 
from combinatorics. Then, in Section~\ref{sec:illum}, we discuss the problem of 
illumination. There, we sketch the proof of a result of Schramm, which is 
currently the best general upper bound for Borsuk's problem. In 
Section~\ref{sec:decomposability}, we state some of the most recent results on 
the problem of decomposability of multiple coverings. 
Section~\ref{sec:asymptotic} provides a window to how the asymptotic theory of 
convex bodies views translative coverings. Finally, in 
Section~\ref{sec:sequences}, we consider coverings by sequences of convex 
bodies.

\subsection*{We use the following notations, and terminology}

For two Borel measurable sets $K$ and $L$ in $\Ren$, let $N(K,L)$ denote the 
\emph{translative covering number} of $K$ by $L$, that is, the minimum number 
of translates of $L$ that cover $K$.

The Euclidean ball of radius one centered at the origin is 
$\B_2^n=\{x\in\Ren\st |x|^2=<x,x> \leq 1\}$, where $<.,.>$ denotes the 
standard scalar product on $\Ren$.
We denote the Haar probability measure on the sphere $\Senm=\{x\in\Ren\st 
|x|=1\}$ by $\sigma$.

A \emph{symmetric} convex body is a \emph{convex body} (that 
is, a compact convex set with non--empty interior) that is centrally symmetric 
about some point.
A hyperplane $H$ \emph{supports} a convex set $K$, if $H$ intersects the 
boundary of $K$, and $K$ is contained in one of the closed half--spaces bounded 
by $H$.
The \emph{support function} $h_K$ of a convex set $K$ is defined as 
$h_K(x)=\sup\{<x,k> \st k\in K \}$ for any $x\in\Ren$.
We denote the \emph{polar} of a convex body $K$ by
\[
 K^\ast=\{x\in\Ren\st <x,k>\leq 1 \mbox{ for all } k\in K\}.
\]
The cardinality of a set $X$ is denoted by $\card{X}$.

\section{Basics}\label{sec:basics}

We list a number of simple properties of covering numbers, their proofs 
are quite straight forward, cf.\cite{AGM15}.
\begin{fact}
Let $K,L,M$ be convex sets in $\Ren$, $T:\Ren\to\Ren$ an invertible linear 
transformation. The we have
\begin{eqnarray}
 N(K,L)&=&N(T(K),T(L)),\\
 N(K,L)&\leq& N(K,M)N(M,L),\\
 N(K+M,L+M)&\leq& N(K,L),\\
 N(K,2(K\cap L))&\leq& N(K,L),\mbox{ if } K=-K. \label{eq:twoacapb}
\end{eqnarray}
\end{fact}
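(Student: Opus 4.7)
The plan is to verify the four inequalities in order. The first three are essentially bookkeeping, and only the last carries a real geometric idea.

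For $N(K,L)=N(T(K),T(L))$, I would use that an invertible linear $T$ sends the family $\{L+x_i\}_i$ bijectively to $\{T(L)+T(x_i)\}_i$ via $T(L+x)=T(L)+T(x)$ and preserves inclusions, so a minimum cover on one side gives one on the other. For the multiplicativity $N(K,L)\leq N(K,M)\,N(M,L)$, I would concatenate: fix optimal covers $K\subseteq\bigcup_i(M+x_i)$ and $M\subseteq\bigcup_j(L+y_j)$; translation invariance of covering numbers then means each $M+x_i$ is covered by the $N(M,L)$ translates $\{L+y_j+x_i\}_j$, so their union covers $K$ with at most $N(K,M)\,N(M,L)$ translates of $L$. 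For $N(K+M,L+M)\leq N(K,L)$, I would Minkowski-sum with $M$ directly: from $K\subseteq\bigcup_i(L+x_i)$ one obtains $K+M\subseteq\bigcup_i((L+M)+x_i)$.

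For \eqref{eq:twoacapb}, I would start from an optimal cover $K\subseteq\bigcup_{i=1}^{N(K,L)}(L+x_i)$, discard the indices with $A_i:=K\cap(L+x_i)=\emptyset$, and pick a witness point $a_i\in A_i$ for each remaining $i$. The claim I aim to prove is
\[
A_i\;\subseteq\;2(K\cap L)+a_i,
\]
which would immediately provide a cover of $K$ by $N(K,L)$ translates of $2(K\cap L)$. For $y\in A_i$, the midpoint $\tfrac12(y-a_i)=\tfrac12 y+\tfrac12(-a_i)$ lies in $K$ by convexity together with the hypothesis $K=-K$, which supplies $-a_i\in K$; the analogous midpoint computation applied to $y-x_i\in L$ and $-(a_i-x_i)$ should place $\tfrac12(y-a_i)$ in $L$ as well.

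I expect the only real obstacle to be this last $L$-side of the midpoint argument: one needs $-(a_i-x_i)\in L$, so some symmetry of $L$ must be invoked (beyond the $K=-K$ that the statement records). Once this is available---for instance when $L$ is also centrally symmetric, which is the standard setting in which this fact is used---the remainder is a routine inclusion check.
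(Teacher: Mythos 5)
Your arguments for the first three identities are correct and are the standard ones: linear invariance, chaining covers via translation-invariance, and Minkowski-summing a cover with $M$. The paper itself does not supply proofs for this Fact (it refers the reader to the reference cited there), so there is nothing to compare against in detail for those parts; your treatment is exactly what one would expect.

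On \eqref{eq:twoacapb}, your midpoint argument is the right one, and the obstruction you flag is real, not a technicality. With $A_i=K\cap(L+x_i)$ and $a_i\in A_i$, for $y\in A_i$ one writes $\tfrac12(y-a_i)=\tfrac12 y+\tfrac12(-a_i)$ to get membership in $K$ (using $K=-K$), and $\tfrac12(y-a_i)=\tfrac12(y-x_i)+\tfrac12\bigl(-(a_i-x_i)\bigr)$ to get membership in $L$; the second step requires $-(a_i-x_i)\in L$, i.e.\ $L=-L$. You correctly observe this. In fact the hypothesis $L=-L$ cannot be dropped: the inequality as printed (with only $K=-K$) is false. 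Take $K=[-1,1]^2$ and let $L$ be a very eccentric ellipse (say with semi-axes $10$ and $0.1$) positioned so that only a thin sliver of it meets $K$. Then $K\cap L$ has tiny area, so $\vol\bigl(2(K\cap L)\bigr)$ is tiny and $N\bigl(K,2(K\cap L)\bigr)\ge\vol(K)/\vol\bigl(2(K\cap L)\bigr)$ is arbitrarily large, while $N(K,L)$ is bounded (by translation-invariance it equals $N(K,\text{ellipse})$, which is some fixed number for this ellipse). So the correct statement of \eqref{eq:twoacapb} should carry the hypothesis $K=-K$ \emph{and} $L=-L$, which is exactly the setting in which the paper later uses it (in the proof of Lemma~\ref{lem:sudakovdualequivalence}, where $L$ is a dilate of $K^\ast$, itself origin-symmetric). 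With that hypothesis added, your proof is complete and correct.
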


We note a special property of the Euclidean ball as a covering set. 
\begin{fact}
Let $K$ be a convex set in $\Ren$. If $K$ is covered by $t$ Euclidean balls, 
then $K$ is covered by $t$ Euclidean balls with centers in $K$.
\end{fact}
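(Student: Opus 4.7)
The plan is to replace each ball's center by its nearest point of $K$ and verify that the new balls still cover $K$. Assume (by replacing $K$ by $\cl K$, which is covered iff $K$ is) that $K$ is closed, and let the given cover be $B_i=x_i+r_i\B_2^n$, $i=1,\dots,t$, where I allow the radii $r_i$ to differ. For each $i$ with $K\cap B_i\neq\emptyset$, let $y_i$ be the metric projection of $x_i$ onto $K$ (well-defined and unique for a closed convex set); for the remaining indices, pick $y_i\in K$ arbitrarily.

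The key ingredient is the fact that the metric projection onto a closed convex set is contractive in the following sense: for every $z\in K$,
\[
 |z-y_i|\le|z-x_i|.
\]
This is standard: the variational characterization of $y_i$ yields $\langle z-y_i,\,x_i-y_i\rangle\le 0$ for every $z\in K$, and then the expansion
\[
 |z-y_i|^2 = |z-x_i|^2 + 2\langle z-y_i,\,x_i-y_i\rangle - |x_i-y_i|^2
\]
gives $|z-y_i|^2\le|z-x_i|^2-|x_i-y_i|^2\le|z-x_i|^2$.

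With this inequality in hand, the conclusion is immediate: if $z\in K\cap B_i$, then $|z-y_i|\le|z-x_i|\le r_i$, so $z\in y_i+r_i\B_2^n$. Hence $K=\bigcup_i (K\cap B_i)\subseteq\bigcup_i(y_i+r_i\B_2^n)$ is a cover of $K$ by $t$ balls of the same radii as the original, centered in $K$. No step poses a serious obstacle; the entire argument rests on the standard contraction property of the nearest-point projection onto a closed convex set.
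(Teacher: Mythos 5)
Your proof is correct and is essentially the same argument as the paper's: the paper's key observation — that the intersection of a ball with a half-space not containing its center lies inside the equal ball centered at the projection of the center onto the bounding hyperplane — is precisely the contraction property of the nearest-point map onto the closed convex set $K$, applied via the supporting half-space at the nearest point $y_i$. You have merely written out the algebraic form of the same geometric fact.
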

This fact follows from the observation that the intersection of $\B_2^n$ with a 
half-space not containing the origin is contained in the unit ball centered at 
the orthogonal projection of the origin to the bounding hyperplane of the 
half-space.

The following obvious lower bound is often sufficient:
\begin{equation}
N(K,L) \geq \frac{\vol(L)}{\vol(K)}.
\end{equation}

For an upper bound, assume that $L$ is symmetric. Let 
$X+(L/2)$ be a \emph{saturated packing} of translates of 
$L/2$ in $K+(L/2)$, that is a maximal family of translates of $L/2$ with 
pairwise disjoint interiors. Then $K\subseteq X+L$, that is, we have a covering 
of $K$ 
by $\card{X}$ translates of $L$. Thus,
\begin{equation}\label{eq:simplepackingbound}
 N(K,L)\leq \card{X}\leq 2^n \frac{\vol(K+(L/2))}{\vol(L)}, \mbox{ if } L=-L.
\end{equation}

Section~\ref{sec:wholespace}, and a large part of this paper discuss how this 
bound can be improved.

\section{Covering the whole space}\label{sec:wholespace}

Let $K$ be a convex body, $\Lambda$ a lattice, and $T$ a finite set in $\Ren$. 
We call the family $\FF=K+\Lambda+T=\{K+v+t\st v\in\Lambda, t\in T\}$ a 
\emph{periodic arrangement} of translates of $K$.
The \emph{density} of $\FF$ is defined as $\delta(\FF)=|T|\vol(K)/\det\Lambda$. 
We say that $\FF$ is a \emph{covering} of $\Ren$ if $\cup\FF=\Ren$. The 
\emph{periodic translative covering density} $\theta(K)$ of $K$ is the infimum 
of the densities of periodic coverings of $\Ren$ by translates of $K$.

The first milestone in the theory of translative coverings is the 
following theorem of Rogers.
\begin{thm}[Rogers, \cite{Ro57}]\label{thm:Rogers}
  Let $K$ be a bounded convex set in $\Ren$ with non-empty interior. Then the 
periodic translative covering density of $K$ is at most
  \begin{equation}
  \theta(K)\leq n\ln n+ n\ln\ln n + 5n.
  \end{equation}
\end{thm}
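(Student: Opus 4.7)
I would take a probabilistic approach, which is both natural and well-matched to the density formulation of the statement. The plan is to build a periodic family $\FF = K + \Lambda + T$ in two stages: produce most of the covering by random translates inside one fundamental cell, then use a small number of deterministic translates to mop up whatever is left.

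Fix a lattice $\Lambda \subseteq \Ren$ of covolume $V$, chosen so that $K-K$ meets $\Lambda$ only at the origin (so that $K$ injects into the torus $\Ren/\Lambda$); write $\rho := \vol(K)/V$. Using $\Lambda$-periodicity, one checks that for a uniformly random $t$ in a fundamental domain $P$ and any fixed $x \in \Ren$,
\[
  \Pe\bigl[x \in K + t + \Lambda\bigr] = \rho.
\]
Sampling $t_1, \dots, t_N$ i.i.d.\ in $P$ and setting $U := P \setminus \bigcup_i (K + t_i + \Lambda)$, linearity of expectation yields
\[
  \Ee\bigl[\vol(U)\bigr] = V(1-\rho)^N \leq V \expo{-N\rho}.
\]

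I would choose the parameters so that $N\rho \approx \ln n + \ln\ln n$, with $\rho$ correspondingly small, so that the expected uncovered volume is small enough to be mopped up cheaply. For a realization with $\vol(U) \leq \Ee[\vol(U)]$, I would cover the residue $U$ by extra translates of $K$, ideally using lattice-aligned translates $K + v$, $v \in \Lambda$: the count of such $v$ for which $(K + v) \cap U \neq \emptyset$ is bounded by the number of lattice points of $\Lambda$ inside $U - K$, and this in turn is controlled by a volume estimate. The density of the random piece is $N\rho$, the residue contributes a further $O(n)$ to the density, and summing and optimizing in $\rho$ delivers the target bound, with the specific constants $n \ln n + n \ln\ln n + 5n$ coming from careful bookkeeping.

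The main obstacle is exactly this residue-covering step. The set $U$ has very small volume but can have a geometrically awkward shape, and a naive general bound such as (\ref{eq:simplepackingbound}) carries a factor of $2^n$ which would destroy the estimate. The heart of the argument is to exploit the lattice structure to cover $U$ efficiently and to balance the small parameter $\rho$ against $N$ so that no such $2^n$-type losses appear; only then can the sharp additive bound $n\ln n + n\ln\ln n + 5n$ emerge, rather than a weaker constant-factor or exponential overhead.
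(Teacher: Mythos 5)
Your overall plan --- random translates in one fundamental cell, then mop up the residue --- is exactly the shape of Rogers' argument, but your residue step does not work, and your numbers are mis-calibrated. Lattice-aligned translates $K+v$ with $v\in\Lambda$ are useless here: their union $K+\Lambda$ is the original sparse arrangement of density $\rho$, so it cannot possibly cover the residue $U$, and counting lattice points of $\Lambda$ in $U-K$ bounds the number of such translates that \emph{touch} $U$, not the number needed to \emph{cover} $U$. Separately, the calibration $N\rho\approx\ln n+\ln\ln n$ is off by a factor of $n$: since the density of the random stage is $N\rho$, it must be roughly $n\ln n+n\ln\ln n$ (if it were only $\ln n+\ln\ln n$, the conclusion would be a vastly stronger theorem), and only then is the expected uncovered volume $V e^{-N\rho}\approx V(n\ln n)^{-n}$ --- your choice would leave an uncovered fraction $1/(n\ln n)$, far too large to mop up with $O(n)$ extra density.

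The idea you are missing is the centroid normalization combined with a \emph{saturated packing of shrunken copies}. Rogers first normalizes so that $\vol(K)=1$ and the centroid of $K$ is the origin; this gives the classical inclusion $K\subset -nK$, equivalently $-\tfrac1n K\subset K$. Inside the uncovered set he takes a maximal (saturated) packing of translates $y_j-\tfrac1n K$, $j=1,\dots,M$; a volume count shows $M$ is so small that the periodic family $K+\Lambda+\{y_1,\dots,y_M\}$ adds only $O(1)$ to the density. Each shrunken piece $y_j-\tfrac1n K$ sits inside the full translate $y_j+K$, so the full copies at the $y_j$ ``mark'' the residue. Finally, dilating every member of both families by the factor $1+\eta$, with $\eta=1/(n\ln n)$, yields a genuine covering: the maximality of the packing together with $K\subset -nK$ ensures every residual point is caught after the inflation. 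The inflation costs only $(1+\eta)^n\le e^{1/\ln n}=1+O(1/\ln n)$ multiplicatively, which is exactly what turns $n\ln n+n\ln\ln n$ into $n\ln n+n\ln\ln n+O(n)$. Your instinct that a $2^n$-type loss must be avoided in the mop-up is correct --- but it is avoided by the centroid bound and the saturated-packing argument, not by any lattice structure.
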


One can define the translative covering density in general, for non-periodic 
arrangements as well (cf. \cites{LFT53,RoBook64,PaAg95,Bo04}). However 
--perhaps, not 
surprisingly--, no better bound is 
known in this case.

Earlier, exponential upper bounds for the covering density were obtained by 
Rogers, Bambah and Roth, and for the special case of the Euclidean ball by 
Davenport and Watson (cf. \cite{Ro57} for references).
The last summand, $5n$ may be replaced by $3n$, if $n$ is sufficiently large.
The current best bound on $\theta(K)$ is due to G. Fejes T\'oth \cite{FTG09}, 
who replaced $5n$ by $n+o(n)$ (see Theorem~\ref{thm:ftg}). It is an open 
problem whether one can improve the bound by a multiplicative factor bellow 1, 
or, very ambitiously, if $Cn$ is an upper bound, for some universal $C>0$.

It is natural to ask what happens if the density is replaced by the maximum 
multiplicity.
\begin{thm}[Erd\H os, Rogers, \cite{ER61}]\label{thm:ErdosRogers}
For any convex body $K$ in $\Ren$ there is a periodic covering of 
$\Ren$ by translates of $K$ such that no point is covered by more than 
$e(n\ln n+ n\ln\ln n + 4n)$ translates, and the density is below $n\ln n+ 
n\ln\ln n + 4n$, provided $n$ is large enough.
\end{thm}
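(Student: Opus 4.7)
The plan is to strengthen the probabilistic construction behind Theorem~\ref{thm:Rogers} by augmenting the density estimate with a Chernoff-type bound that controls the maximum multiplicity point-wise. Fix a lattice $\Lambda \subset \Ren$ with fundamental cell $P$, set $p = \vol(K)/\vol(P)$, and sample $t_1,\ldots,t_m$ independently and uniformly at random in $P$. The periodic arrangement $\FF = K + \Lambda + \{t_1,\ldots,t_m\}$ has density exactly $mp$, so the parameters should be tuned so that $mp = n\ln n + n\ln\ln n + 4n$, matching the density target in the statement.

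The core probabilistic input is the following. For any fixed $x \in \Ren$, translation invariance together with independence gives that $N_x := \card{\{i \st x \in K + \Lambda + t_i\}}$ is distributed as $\Binom(m,p)$. Hence I obtain two point-wise estimates: the non-coverage bound $\Pe[N_x = 0] = (1-p)^m \le \expo{-mp}$, and the multiplicative Chernoff inequality $\Pe[X \ge (1+\delta)\mu] \le \bigl(\expo{\delta}/(1+\delta)^{1+\delta}\bigr)^\mu$ applied at $\delta = e-1$, $\mu = mp$, which yields $\Pe[N_x \ge e \cdot mp] \le \expo{-mp}$. The clean factor $e$ in the multiplicity bound in the theorem comes precisely from this choice of $\delta$.

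To turn these point-wise bounds into a simultaneous one over all of $P$, I would pass to a sufficiently fine $\epsilon$-net $X \subset P$ and apply the union bound. After placing $K$ in a convenient affine position (e.g.\ John's position, so $K$ contains a ball of inradius comparable to $\diam(K)/n$), one may replace $K$ by a slightly shrunk body for the covering event and a slightly inflated body for the multiplicity event, so that simultaneous control over $X$ (a set of cardinality $\card{X} \le (C\diam(P)/\epsilon)^n$, whence $\ln \card{X} \lesssim n\ln n$) propagates to every point of $P$. With the target $mp = n\ln n + n\ln\ln n + 4n$ and $n$ large, the expected number of net points that are either uncovered or in more than $e \cdot mp$ translates is strictly less than $1$, so some realization $(t_1,\ldots,t_m)$ satisfies both conditions simultaneously. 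A small corrective family of translates then cleans up any residual uncovered region without perturbing the density or multiplicity bounds.

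The main obstacle is exactly this net-to-continuum transfer together with the accounting of lower order terms. One must check that the inflation and deflation of $K$ used in the discretization step perturb $p$ and the multiplicity threshold by only $(1+o(1))$ factors, so that the announced leading behavior $n\ln n + n\ln\ln n$ survives intact and the improvement from $5n$ (in Rogers' theorem) down to $4n$ is realized in the $O(n)$ remainder. This delicate bookkeeping, which depends on taking $\Lambda$ to be an optimally dense packing lattice for $K$ and on keeping $\epsilon$ small enough relative to the inradius of $K$ but not so small as to blow up $\ln \card{X}$, is the source of the hypothesis that $n$ be sufficiently large.
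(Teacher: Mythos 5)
Your approach is not the one the survey points to, and it has a quantitative gap that, as written, destroys the leading term of the estimate.

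On the route: the survey describes the original Erd\H os--Rogers argument as Rogers' random-sampling scheme ``combined with a sophisticated way of keeping track of multiply covered points using an inclusion--exclusion formula,'' i.e.\ an estimate on the \emph{measure} of the over-covered set, not a pointwise large-deviation bound; the alternative proof sketched in the survey, due to F\"uredi and Kang, is built on the Lov\'asz Local Lemma. A Chernoff--plus--$\epsilon$-net--plus--union-bound scheme is a third, genuinely different route. Being different is fine, but one then has to carry it through, and that is where the trouble starts.

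The key step you do not justify is the inequality $\ln\card{X}\lesssim n\ln n$ \emph{with implicit constant close to $1$}. A union bound over the net requires $\ln\card{X}\le mp+O(n)=n\ln n+n\ln\ln n+O(n)$, and in your own setup this fails by a factor of about $2$. Indeed, with $K$ in John's position the inradius of $K$ is of order $\diam(K)/n$, so to make the shrink/inflate bodies $K_{-\epsilon}$ and $K+\epsilon\B_2^n$ perturb $p$ by only a $(1+o(1))$ factor you are forced to take $\epsilon=o(\diam(K)/n^2)$; meanwhile any packing lattice for $K$ has fundamental cell with $\diam(P)\gtrsim\diam(K)$, so a Euclidean $\epsilon$-net of $P$ has $\ln\card{X}\gtrsim n\ln(\diam(P)/\epsilon)\gtrsim n\ln(n^2)=2n\ln n$. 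This is not a lower-order loss; it doubles the required density. The estimate can be partially rescued by measuring the net in a metric adapted to $K$ (for instance a $\delta\bigl(K\cap(-K)\bigr)$-net, so that $\card{X}\lesssim\vol(P)/\bigl(\delta^n\vol(K\cap(-K))\bigr)$, which together with the Rogers--Shephard inequality $\vol(K\cap(-K))\ge 2^{-n}\vol(K)$ brings the leading term down to $n\ln n$), together with a carefully chosen packing lattice. But this replacement, and the resulting bookkeeping of all the $O(n)$ contributions (from the packing density of $\Lambda$, from $\vol(K)/\vol(K\cap(-K))$, from the shrink/inflate loss, from the union bound itself), is precisely the content of the proof, and your sketch passes over it.

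A secondary issue is the final ``cleanup'' step. If the union bound over the net succeeds then, by the shrink/inflate device, every point of $P$ is already covered and simultaneously has bounded multiplicity; there is nothing to clean up. If it does not succeed you must add further translates to plug holes, and every translate you add raises the pointwise multiplicity, possibly past the threshold $e\cdot mp$. Controlling that increase is exactly the delicate part of the Erd\H os--Rogers argument, and it is not addressed.
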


A good candidate for a ``bad'' convex body, that is, one that cannot cover the
space economically is the Euclidean ball, $\B^n_2$.

\begin{thm}[Coxeter, Few, Rogers, \cite{CFR59}]\label{thm:cfr59}
 $\theta(\B^n_2)\geq Cn$ with a universal constant $C>0$.
\end{thm}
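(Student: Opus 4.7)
The plan is to prove $\theta(\B^n_2)\geq Cn$ via the classical \emph{simplex bound} of Rogers. First, reduce to a periodic covering of $\Ren$ by unit balls with density at most $\theta(\B^n_2)+\epsilon$, and let $X$ denote the set of centers. Consider the Delaunay tessellation of $\Ren$ with respect to $X$, which generically partitions $\Ren$ into $n$-simplices. Two structural facts are key: every Delaunay simplex $\sigma$ has circumradius at most $1$ (its circumcenter is a Voronoi vertex; no point of $X$ lies strictly closer to it than the vertices of $\sigma$ by the empty-circumsphere property; and it must be covered by some unit ball of the covering); and every point $y\in\sigma$ has its nearest center in $X$ among the vertices $v_0,\ldots,v_n$ of $\sigma$ by Voronoi--Delaunay duality, so $\sigma\subseteq\bigcup_{j=0}^n(\B^n_2+v_j)$.

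The main geometric step is \emph{Rogers's simplex inequality}: for every $n$-simplex $\sigma$ with circumradius at most $1$ and vertices $v_0,\ldots,v_n$,
\[
\frac{\sum_{j=0}^n \vol\bigl(\sigma\cap(\B^n_2+v_j)\bigr)}{\vol(\sigma)}\;\geq\;\theta_n,
\]
where $\theta_n$ denotes the value of the left-hand side for the regular $n$-simplex inscribed in $\B^n_2$. Granting this and summing over the Delaunay tessellation, using that each ball $\B^n_2+x$ meets only simplices in the star of $x$, yields
\begin{align*}
\vol(\Ren)=\sum_\sigma \vol(\sigma)
&\leq \frac{1}{\theta_n}\sum_\sigma\sum_j \vol\bigl(\sigma\cap(\B^n_2+v_j)\bigr)\\
&\leq \frac{1}{\theta_n}\sum_{x\in X}\vol(\B^n_2+x)=\frac{\theta(\B^n_2)+\epsilon}{\theta_n}\vol(\Ren),
\end{align*}
interpreted in a fundamental domain of the periodic covering. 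Letting $\epsilon\to 0$ gives $\theta(\B^n_2)\geq\theta_n$.

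Finally, for the regular $n$-simplex inscribed in $\B^n_2$ the edge length is $\sqrt{2(n+1)/n}$ and the inradius is $1/n$, so the unit ball at each vertex sits inside the tangent cone of $\sigma$ at that vertex; thus $\sigma\cap(\B^n_2+v_j)$ is a spherical sector and $\theta_n=(n+1)\,\Omega\,\vol(\B^n_2)/\vol(\sigma)$, where $\Omega$ denotes the vertex solid angle of the simplex as a fraction of $S^{n-1}$. A direct computation using Stirling's formula together with a lower bound on $\Omega$ gives $\theta_n\geq Cn$ for a universal $C>0$. The main obstacle in this approach is Rogers's simplex inequality itself: showing that the regular inscribed simplex minimizes the vertex-ball coverage ratio among simplices of given circumradius is a non-trivial extremal geometric result, typically established by a symmetrization argument on edge lengths that successively pushes an arbitrary simplex towards the regular one while decreasing the ratio.
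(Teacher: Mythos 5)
The paper does not prove Theorem~\ref{thm:cfr59}; it states it and cites \cite{CFR59}. Your proposed route --- a Delaunay-type decomposition combined with Rogers's simplex bound --- is indeed the classical approach, and the bookkeeping (circumradius of each Delaunay simplex at most one, summing the simplex inequality over the tessellation, passing to a fundamental domain) is organized correctly. But as written the argument has one genuine hole and two false supporting claims. The hole is the ``simplex inequality'' itself: that among $n$-simplices of circumradius at most $1$ the regular one inscribed in $\B_2^n$ minimizes $\sum_j\vol(\sigma\cap(\B_2^n+v_j))/\vol(\sigma)$. You state it and acknowledge you are not proving it, but this extremal statement \emph{is}, essentially, the Coxeter--Few--Rogers theorem: everything before it is elementary bookkeeping and everything after it (Stirling plus a solid-angle estimate giving $\theta_n\geq Cn$, neither of which you carry out either) is calculus. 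Without the lemma, the proposal is an outline, not a proof.

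Separately, the claim that a point $y$ in a Delaunay simplex $\sigma$ has its nearest centre among the vertices of $\sigma$ ``by Voronoi--Delaunay duality'' is false. In $\Re^2$, with $A=(-1,0)$, $B=(1,0)$, $C=(0,3/2)$, $D=(0,-9/10)$, the Delaunay triangles are $ABC$ and $ABD$, and $y=(0,1/100)\in ABC$ has $D$ as its unique nearest centre even though $D$ is not a vertex of $ABC$; rescaling by the circumradius $13/12$ of $ABC$ makes that circumradius exactly $1$ without changing the conclusion. The inclusion $\sigma\subseteq\bigcup_j(\B_2^n+v_j)$ that you actually need is nevertheless true whenever the circumradius of $\sigma$ is at most $1$, but for a different reason: for $y\in\conv\{v_0,\dots,v_n\}$ with circumcentre $c$ and circumradius $r$, if $|v_j-y|>r$ for every $j$ then expanding $|v_j-c|^2=r^2$ gives $\langle v_j-y,\,c-y\rangle>|c-y|^2/2\geq 0$ for all $j$, while the barycentric coordinates of $y$ force $\sum_j\lambda_j\langle v_j-y,\,c-y\rangle=0$, a contradiction; hence $\min_j|y-v_j|\leq r$. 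Likewise, ``each ball $\B_2^n+x$ meets only simplices in the star of $x$'' is not true; the bound $\sum_\sigma\sum_j\vol(\sigma\cap(\B_2^n+v_j))\leq\sum_{x\in X}\vol(\B_2^n+x)$ that you want follows from the correct (weaker) observation that the simplices incident to a fixed $x$ have pairwise disjoint interiors.
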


If we restrict ourselves to \emph{lattice coverings}, that is, coverings of 
$\Ren$ by translates of a convex body $K$ where the translation vectors form a 
lattice in $\Ren$, we have a much weaker bound. Rogers \cite{Ro59} showed that 
for any $K$ there is a lattice $\Lambda$ such that $K+\Lambda$ is a covering of 
$\Ren$ with density at most $n^{\log_2\ln n + O(1)}$.

The original proofs of Theorems~\ref{thm:Rogers} and \ref{thm:ErdosRogers} 
yield 
periodic coverings without any further structure. G. Fejes T\'oth gave a proof 
of Theorem~\ref{thm:Rogers} that yields a covering with more of a lattice--like 
structure, and a slightly better density bound.

\begin{thm}[G. Fejes T\'oth, \cite{FTG09}]\label{thm:ftg}
For any convex body $K$ in $\Ren$ there is a lattice $\Lambda$ and a set 
$T\subset\Ren$ of $O(\ln n)$ translation vectors such that $K+\Lambda+T$ covers 
$\Ren$ with density at most $n\ln n + n\ln\ln n + n + o(n)$.
\end{thm}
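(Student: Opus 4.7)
The plan is to refine the probabilistic argument underlying Theorem~\ref{thm:Rogers} by starting from a lattice $\Lambda$ whose translates of $K$ already form a high-multiplicity near-covering of $\Ren$, and then using only $|T|=O(\ln n)$ random cosets to finish the job. Concretely, I would fix the covolume of $\Lambda$ so that the multiplicity parameter $M := \vol(K)/\det\Lambda$ is of order $n$; then the density of any arrangement $K+\Lambda+T$ with $|T|=m$ equals $mM$, and the goal reduces to finding $m=O(\ln n)$ shifts $t_1,\dots,t_m$ such that $K+\Lambda+\{t_1,\dots,t_m\}=\Ren$. The gain over the classical Rogers proof, where the entire $n\ln n$ factor comes from $m$ itself, is that a substantial part of the covering work is now carried by the lattice.

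For the probabilistic step I would pick $t_1,\dots,t_m$ independently and uniformly from a fundamental domain $D$ of $\Lambda$ and analyze, for each $x\in D$, the covering probability
\[
\Pr[x \in K+t+\Lambda] \;=\; \frac{\vol\bigl((x-K) \bmod \Lambda\bigr)}{\det\Lambda}.
\]
The naive packing identity only gives $\min(1,\vol(K)/\det\Lambda)$, which is useless when $M\ge 1$. The key is to argue that if $\Lambda$ is chosen generically (either by averaging over lattices in the Minkowski--Hlawka spirit, or by perturbing $\Lambda$ slightly relative to $K$), then the $\Lambda$-folded copies of $K$ overlap in an essentially Poisson fashion, so the projected measure approaches $\det\Lambda\cdot(1-e^{-M})$ and the non-coverage probability for a single random shift is roughly $e^{-M}$. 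Once this is in hand, independence of the $t_i$ gives
\[
\Pr[x\notin K+\Lambda+\{t_1,\dots,t_m\}] \;\le\; e^{-mM},
\]
so the expected uncovered volume inside $D$ is at most $\det\Lambda\cdot e^{-mM}$.

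Choosing $m$ so that $mM \ge n\ln n + n\ln\ln n + n + o(n)$ — for instance $M=n+o(n)$ and $m=\ln n + \ln\ln n + 1 + o(1)$ — the expected uncovered volume is exponentially small in $n$, and a standard cleanup (absorbing the residual uncovered portion by $o(1)$ further random translates, or by perturbing the existing ones) converts the almost-covering into a full covering without affecting the leading density. This yields $|T|=O(\ln n)$ and density exactly $mM=n\ln n+n\ln\ln n+n+o(n)$, as claimed. The hard part is the probability bound in the second paragraph: the arrangement $K+\Lambda$ is deterministic, so the Poisson-like estimate $1-e^{-M}$ is not automatic and must either be obtained by averaging over an appropriate family of lattices $\Lambda$ (so that Fubini separates the lattice from the shifts) or by constructing $\Lambda$ with controlled overlap statistics. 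It is exactly this sharper bookkeeping that simultaneously shaves Rogers' additive $5n$ down to $n+o(n)$ and collapses the number of cosets from $\Theta(n\ln n)$ to $O(\ln n)$.
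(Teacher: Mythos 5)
Your high-level plan — take a lattice $\Lambda$ with $\vol(K)/\det\Lambda$ of order $n$ leaving only an exponentially small uncovered density, then finish with $O(\ln n)$ additional cosets — is exactly the architecture of G.~Fejes T\'oth's proof. But the step you yourself flag as ``the hard part'' is precisely the step the paper devotes a deep external theorem to, and your proposal does not close it. The paper's Lemma~\ref{lem:SchCor}, a consequence of Schmidt's Theorem~10* in \cite{Sch59}, states that for any $c<c_0\approx0.278$ one can find a lattice arrangement of any given Borel set with density $cn$ whose complement has density at most $(1+\epsilon)e^{-cn}$. This is a genuine theorem of the geometry of numbers, not a soft averaging fact: a naive Siegel/Minkowski--Hlawka mean-value calculation controls only the \emph{average} of $\vol(\mathbb{R}^n\setminus(K+\Lambda))$ over lattices, while what you need is a single $\Lambda$ for which the uncovered density is pointwise exponentially small, and your ``Poisson-like overlap'' heuristic (which would even give the sharper exponent $e^{-n}$ rather than $e^{-c_0 n}$) is exactly what Schmidt's theorem makes rigorous, at the cost of the constant $c_0$. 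Without this lemma your bound $\Pr[x\notin K+\Lambda+t]\le e^{-M}$ is an assumption, not a conclusion.

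Two further points are worth noting. First, where you take $m$ i.i.d.\ random shifts and compute the expected uncovered volume, the paper instead proves a deterministic doubling statement (Lemma~\ref{lem:squaredensity}): averaging over a single shift $t$ shows some $t$ squares the uncovered density, and iterating $\approx\log_2(c^{-1}\ln n)$ times gives $|T|\approx c^{-1}\ln n$ with uncovered density about $(n\ln n)^{-n}$. Quantitatively this is equivalent to your calculation ($\delta_0^{|T|}$ either way), so this is a stylistic rather than essential difference. Second, your cleanup step is underspecified: ``$o(1)$ further random translates'' is not a meaningful operation, and ``perturbing the existing ones'' would itself require an argument. The paper sidesteps this cleanly by running the whole construction with the slightly shrunk body $(1+\lfloor n\ln n\rfloor^{-1})^{-1}K$ and then observing at the very end that replacing the shrunk copies by $K$ itself absorbs the residual uncovered set, with only an $o(n)$ cost in density. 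That rescaling device is standard in Rogers-type arguments and is the right way to turn your almost-covering into an actual covering.
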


We give an outline of the proof of this result in \ref{subsec:ftg}.

The following is a simple corollary to Theorem~\ref{thm:Rogers} (or the 
better bound, Theorem~\ref{thm:ftg}), which was first spelled out in 
\cite{RZ97}.
\begin{cor}[Rogers and Zong \cite{RZ97}]\label{cor:rozo}
 Let $K$ and $L$ be convex bodies in $\Ren$. Then
 \begin{equation}
  N(K,L)\leq \frac{\vol(K-L)}{\vol(L)}(n\ln n + n\ln\ln n + n + o(n)).
 \end{equation}
\end{cor}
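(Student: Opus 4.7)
The plan is to deduce the corollary from Theorem~\ref{thm:ftg} by a translation-averaging argument. Applying the theorem to $L$, we obtain a periodic covering $\FF = L + \Lambda + T$ of $\Ren$ whose density $\theta_L = |T|\vol(L)/\det\Lambda$ is at most $n\ln n + n\ln\ln n + n + o(n)$. For any vector $u \in \Ren$, the translate $\FF + u$ is still a periodic covering of $\Ren$ (with the same lattice and the same density), and therefore the subfamily consisting of those translates of $L$ in $\FF+u$ which meet $K$ is a covering of $K$. The number of such translates is
\[
 N_u \;=\; \bigl|(\Lambda + T + u) \cap (K-L)\bigr|,
\]
since $(L+v)\cap K\neq\emptyset$ is equivalent to $v\in K-L$.

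Next I would average $N_u$ over $u$ in a fundamental domain $D$ of $\Lambda$. Interchanging sum and integral and using that for any measurable set $A\subseteq\Ren$,
\[
 \int_D \sum_{\lambda\in\Lambda}\mathbf{1}_A(\lambda+u)\,\di u \;=\; \vol(A),
\]
one obtains
\[
 \frac{1}{\det\Lambda}\int_D N_u\,\di u
 \;=\; \frac{1}{\det\Lambda}\sum_{t\in T}\vol(K-L-t)
 \;=\; \frac{|T|}{\det\Lambda}\vol(K-L)
 \;=\; \frac{\theta_L}{\vol(L)}\vol(K-L).
\]
Hence some $u\in D$ satisfies $N_u\leq \frac{\vol(K-L)}{\vol(L)}\theta_L$, and the corresponding subfamily of $\FF+u$ yields a covering of $K$ by at most that many translates of $L$. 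Substituting the bound on $\theta_L$ from Theorem~\ref{thm:ftg} gives the conclusion.

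There is no real obstacle here; the only thing to be careful with is the double counting in the averaging identity (each lattice-translate of a point is hit exactly once as $u$ ranges over $D$) and the observation that a periodic covering of $\Ren$, when intersected with $K$, always produces a genuine covering of $K$ by the translates it hits. Both points are routine, so the proof reduces, in essence, to combining Theorem~\ref{thm:ftg} with this one-line averaging computation.
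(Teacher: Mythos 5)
Your proof is correct and follows essentially the same translation-averaging idea as the paper: take an economical covering of space by translates of $L$, translate it, and average the number of translates meeting $K$. The only cosmetic difference is that you average over a fundamental domain of the lattice $\Lambda$ (exploiting the full periodic structure from Theorem~\ref{thm:ftg}), whereas the paper averages over a large cube $C$ and absorbs edge effects into an $\varepsilon$; your variant is a bit cleaner but not a different method.
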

Indeed, consider a covering $L+G$ of a large cube $C$ by translates of $L$ with 
density close to $n\ln n + n\ln\ln n + n + o(n)$. For any $t\in\Ren$, let 
$m(t)=|\{g\in G\st K\cap (g+t+L)\neq\emptyset\}|=|G\cap(K-L-t)|$. By averaging 
$m(t)$ over $t$ in $C$, we obtain that for some $t\in C$, we have 
$m(t)\leq\frac{\vol(K-L)}{\vol(L)}(n\ln n + n\ln\ln n + n + o(n)+\varepsilon)$.

\section{Proofs of Theorems~\ref{thm:Rogers}, \ref{thm:ErdosRogers} and 
\ref{thm:ftg}
}\label{sec:rogersproofs}

\subsection{A probabilistic proof: Cover randomly and then mind the 
gap}\label{sec:prob}
We give an outline of Rogers' proof of Theorem~\ref{thm:Rogers}.

We may assume that $K$ has volume one, and that the centroid (that is, the 
center of mass with respect to the Lebesgue measure) of $K$ is the origin. It 
follows that $K\subset -n K$. (Bonnesen and Fenchel in \S 34. of \cite{BF87} 
give several references to this fact: Minkowski \cite{Mi1897} p. 105, Radon 
\cite{Ra16}, Estermann \cite{Es28} and S\"uss \cite{Su28}.)

Let $C$ be the cube $C=[0,R]^n$, where $R$ is large. Set $\eta=\frac{1}{n\ln 
n}$, and choose $N=R^nn\ln\frac{1}{\eta}$ random translation vectors 
$x_1,\ldots,x_N$ in $C$ uniformly and independently. Let $\Lambda$ be the 
lattice $\Lambda=R\Ze^n$. Thus, we obtain the family 
$K+\Lambda+\{x_1,\ldots,x_N\}$ of translates of $K$. The 
expected density of the union of this family is close to one, 
and hence, one can choose the $N$ translation vectors in such a way that the 
volume of the uncovered part of $C$ is small (at most $R^n(1-R^{-n})^N$).

Next, we take a saturated (that is, maximal) 
packing $y_1-\frac{1}{n}K,\ldots y_M-\frac{1}{n}K$ of translates of 
$-\frac{1}{n}K$ inside this uncovered part of $C$. By the previous volume 
computation, we have few ($M\leq \eta^{-n} R^n(1-R^{-n})^N$) such 
translates. We replace each of these copies of $-\frac{1}{n}K$ by the same 
translate of $K$ and make it a periodic arrangement by $\Lambda$, and we obtain 
$K+\Lambda+\{y_1,\ldots,y_M\}$. 

Now, we have two families of translates of $K$. We enlarge each member of these 
two families by a factor $1+\eta$, and --as it is easy to see-- obtain a 
covering 
of $\Ren$. The omitted computations yield the density bound, finishing the 
proof of Theorem~\ref{thm:Rogers}.

This method (first, picking random copies, and then, filling the small gap in 
a greedy way), developed by Rogers can be applied for obtaining upper bounds 
in other situations as well. The proof of Theorem~\ref{thm:ErdosRogers} given 
by Erd\H os and Rogers is an example of the use of this random covering 
technique combined with a sophisticated way of keeping track of multiply 
covered points using an inclusion--exclusion formula. Other examples include 
bounds on covering the sphere $\Senm$ with spherical caps.

\subsection{Another probabilistic proof: Using the Lov\'asz Local Lem\-ma}
F\"uredi and Kang \cite{FuKa08} gave a proof of 
Theorem~\ref{thm:ErdosRogers} that is essentially different from the original. 
Their method 
yields a slightly worse bound (instead of the order $en\ln n$, they obtain 
$10n\ln n$), but it is very elegant. 

First, we may assume that $K$ is in a nice position, more precisely (by a 
result of Ball \cite{Ba91}, see also \cite{Ba89} for the symmetric 
case), that $\vol(K)=1$, and $\frac{1}{e}\B_2^n\subset K$.
Let $h=1/(4en\sqrt{n})$, and consider the lattice $\Lambda=h\Ze^n$. The 
goal is to cover $\Ren$ with translates of $K$ of the form $K+z$ with 
$z\in\Lambda$. Let $Q=[0,h)^n$ be the half closed, half open fundamental cube 
of $\Lambda$. We define a hypergraph with base set $\Lambda$. The hypergraph 
has two types of edges. For any $z\in\Lambda$, we define a ``small edge'' as 
$A^-(z):=\{y\in\Lambda\st y+Q\subset z+K\}$, and a ``big edge'' as 
$A^+(z):=\{y\in\Lambda\st (y+Q)\cap (z+K)\neq\emptyset\}$. Clearly, all big 
edges are of the same size (say $\alpha$), and so are all small edges. One can 
verify that the size of a small edge is at least $\alpha/2$.

Next, to make the problem finite, let $\ell\in\Ze^+$ be an arbitrarily large 
integer. Our goal is to select vectors $z_1,\ldots,z_t\in\Lambda$ in such a way 
that every point of $[-\ell,\ell]^n\cap\Lambda$ is covered by a small edge 
$A^-(z_i)$, and no point of $[-\ell,\ell]^n\cap\Lambda$ is covered by more than 
$10n\ln n$ large edges of the form $A^+(z_i)$. Clearly, that would suffice for 
proving the theorem. We will pick these vectors randomly: select each vector in 
$\{z\in\Lambda\st A^-(z)\cap [-\ell,\ell]^n\neq\emptyset\}$ with probability 
$p$, where $p=e^{-6/5}10n\ln n/\alpha$.

For every point of $[-\ell,\ell]^n\cap\Lambda$, we have two kinds of bad 
events. One is if it is not covered by a small edge, and second, if it is 
covered by too many big edges. Now, we state the main tool of the proof, 
the Lov\'asz Local Lemma (see Alon and Spencer \cite{AS08} for a good 
introduction of it).

\begin{lem}[Lov\'asz Local Lemma, \cites{EL75, Sp77}]\label{lem:LLL}
  Let $A_1, A_2,\ldots, A_N$ be events in an arbitrary probability space. A 
directed graph $D=(V,E)$ on the set
of vertices $V=\{1,2,\ldots,N\}$ is called a \emph{dependency digraph} for the 
events $A_1, A_2,\ldots, A_N$, if for each $1\leq i\leq N$, the
event $A_i$ is mutually independent of all the events $\{A_j\st (i,j)\notin 
E\}$.
Suppose that the maximum degree of $D$ is at most
$d$, and that the probability of each $A_i$ is at most $p$. 
If $ep(d+1)\leq 1$, then with positive probability no $A_i$ holds.
\end{lem}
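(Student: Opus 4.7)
The plan is to reduce the lemma to a single inductive claim about conditional probabilities. Specifically, I would prove: for every $i \in \{1,\dots,N\}$ and every $S \subseteq \{1,\dots,N\} \setminus \{i\}$,
\begin{equation*}
\Pe\!\left(A_i \,\middle|\, \bigcap_{j \in S} \overline{A_j}\right) \leq \frac{1}{d+1}.
\end{equation*}
Once this is in hand, the theorem drops out from the chain rule:
\begin{equation*}
\Pe\!\left(\bigcap_{i=1}^N \overline{A_i}\right) = \prod_{i=1}^N \Pe\!\left(\overline{A_i} \,\middle|\, \bigcap_{j<i} \overline{A_j}\right) \geq \left(\frac{d}{d+1}\right)^N > 0.
\end{equation*}

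I would prove the inductive claim by induction on $|S|$. The base case $S=\emptyset$ is immediate, since $\Pe(A_i) \leq p \leq \frac{1}{e(d+1)} \leq \frac{1}{d+1}$. For the inductive step, partition $S = S_1 \sqcup S_2$, where $S_1 = \{j \in S \st (i,j) \in E\}$ collects the indices on which $A_i$ may depend (so $|S_1| \leq d$) and $S_2$ collects the rest. Then
\begin{equation*}
\Pe\!\left(A_i \,\middle|\, \bigcap_{j \in S} \overline{A_j}\right) = \frac{\Pe\!\left(A_i \cap \bigcap_{j \in S_1} \overline{A_j} \,\middle|\, \bigcap_{k \in S_2} \overline{A_k}\right)}{\Pe\!\left(\bigcap_{j \in S_1} \overline{A_j} \,\middle|\, \bigcap_{k \in S_2} \overline{A_k}\right)}.
\end{equation*}
The numerator is at most $\Pe(A_i \mid \bigcap_{k \in S_2} \overline{A_k}) = \Pe(A_i) \leq p$, by the mutual independence of $A_i$ from the family $\{A_k \st k \in S_2\}$. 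The denominator, expanded by the chain rule, is a product of $|S_1|$ conditional probabilities of the form $\Pe(\overline{A_{j_l}} \mid \ldots)$ in which every conditioning set has size strictly less than $|S|$; the inductive hypothesis therefore gives each factor at least $1-\frac{1}{d+1}$. Hence the denominator is at least $\bigl(\tfrac{d}{d+1}\bigr)^{|S_1|} \geq \bigl(\tfrac{d}{d+1}\bigr)^d > \tfrac{1}{e}$, and so the ratio is at most $ep \leq \tfrac{1}{d+1}$, closing the induction.

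The main subtlety is the split $S = S_1 \sqcup S_2$ and the associated conditional-probability decomposition: this is the only place where the dependency digraph enters, providing independence for the numerator while allowing the inductive hypothesis to control the denominator. The constant $e$ in the hypothesis $ep(d+1)\leq 1$ appears precisely through the inequality $\bigl(\tfrac{d}{d+1}\bigr)^d > \tfrac{1}{e}$; beyond this, the argument uses nothing more than the chain rule and the definition of the dependency digraph.
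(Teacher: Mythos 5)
Your proof is correct: it is the standard argument for the (symmetric) Lov\'asz Local Lemma as presented in Alon--Spencer, with the inductive bound $\Pe(A_i\mid\cap_{j\in S}\overline{A_j})\leq\frac{1}{d+1}$ established by splitting $S$ into in-neighborhood and complement, bounding the numerator by independence and the denominator by the inductive hypothesis together with $\left(\frac{d}{d+1}\right)^d>\frac1e$. Note, though, that the paper does not prove this lemma at all --- it states it as a black-box tool and refers the reader to \cite{AS08} --- so there is no paper proof to compare against; you have simply reproduced, correctly, the classical proof.
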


Finally, with a geometric argument, one can bound the maximum degree in a 
dependency digraph of the bad events, and Lemma~\ref{lem:LLL} yields the 
existence of a good covering.

\subsection{Covering using few lattices}\label{subsec:ftg}
G. Fejes T\'oth's proof of Theorem~\ref{thm:ftg} relies on a deep 
result, Theorem~10* in \cite{Sch59} of Schmidt. A consequence of this result is 
\begin{lem}\label{lem:SchCor}
 Let $c_0 = 0.278\ldots$ be the root of the equation $1 + x + \ln x = 0$. Then, 
for any $0<c<c_0$, and $\varepsilon>0$, and any sufficiently large $n$, 
for any Borel set $S\subset\Ren$, there is a lattice--arrangement of $S$ with 
density $cn$ covering $\Ren$ with the exception of a set whose density is at 
most $(1+\varepsilon)e^{-cn}$ for some universal constant $c>0$.
\end{lem}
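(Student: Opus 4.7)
The plan is to deduce the lemma directly from Schmidt's Theorem~10* in \cite{Sch59}. Schmidt studies the counting function $N_B(\Lambda)=\card{(\Lambda\setminus\{0\})\cap B}$ for a random unimodular lattice $\Lambda$ drawn from the Siegel--Haar probability measure on $SL_n(\Re)/SL_n(\Ze)$, and controls its moments sharply enough to obtain Poisson--type tail estimates. The consequence I want to exploit is that when $S\subset\Ren$ is a Borel set with $\vol(S)<c_0 n$ and $n$ is large, for each fixed $x\in\Ren$,
\[
 \Pe_{\Lambda}\bigl[x\notin \Lambda+S\bigr]\leq (1+o(1))e^{-\vol(S)}.
\]

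First I would normalize, rescaling $S$ so that $\vol(S)=cn$ and seeking a unimodular $\Lambda$; then the density of the uncovered set equals the volume of the uncovered region inside a single fundamental domain. Applying Fubini over such a fundamental domain $F$ of $\Lambda$ (of volume $1$),
\[
 \Ee_{\Lambda}\vol\bigl(F\setminus(\Lambda+S)\bigr)=\int_F\Pe_{\Lambda}[x\notin \Lambda+S]\,\di x\leq(1+o(1))e^{-cn}.
\]
Since the expected uncovered density is at most $(1+o(1))e^{-cn}$, a standard Markov--type averaging over $SL_n(\Re)/SL_n(\Ze)$ yields a deterministic lattice $\Lambda_0$ whose uncovered density is at most $(1+\varepsilon)e^{-cn}$ for $n$ large. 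Undoing the rescaling produces the required lattice--arrangement of $S$ at density $cn$.

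The main obstacle is extracting Schmidt's estimate in the right form, not the averaging. Schmidt bounds moments $\Ee\,N_B(\Lambda)^k$ by sums indexed by primitive lattice relations, and the transcendental threshold $c_0$, the root of $1+x+\ln x=0$, emerges when one balances the leading exponential term $e^{-\vol(S)}$ against sub--exponential corrections in these moment sums. Below $c_0$ the corrections are dominated by the main term and the Poisson--type bound survives; above it, they are not, and Schmidt's approach no longer gives a non--trivial exponential saving. The delicate verification is to check that this estimate holds uniformly over all admissible Borel sets $S$ of the prescribed volume, not merely over those of some particularly nice shape; but this uniformity is built into Schmidt's moment calculations, so once the correct statement is pulled out of \cite{Sch59}, the remaining steps are routine.
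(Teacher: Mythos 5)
Note first that the paper offers no proof of Lemma~\ref{lem:SchCor}: it simply states the lemma as ``a consequence of'' Theorem~10* of Schmidt~\cite{Sch59}, so there is no in--paper argument to compare against line by line. The real question is whether your sketched derivation from Schmidt is sound, and there is a genuine gap at its central step.

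The pointwise bound $\Pe_{\Lambda}\left[x\notin\Lambda+S\right]\le(1+o(1))e^{-\vol(S)}$, uniform in $x$, is not something that can simply be read off from Schmidt's moment estimates, and it is not the natural object to aim for. The Siegel--Haar measure on $SL_n(\Re)/SL_n(\Ze)$ is \emph{not} translation invariant (every unimodular lattice contains the origin), so the void probability $\Pe_{\Lambda}\left[\Lambda\cap(x-S)=\emptyset\right]$ genuinely depends on how $x-S$ sits relative to $0$: it is $0$ when $x\in S$, and it is not a priori controlled when $x-S$ comes close to the origin. What Schmidt actually bounds is the \emph{expected density of the uncovered set}: one expands $\prod_{v\in\Lambda}\left(1-\chi_S(x-v)\right)$, integrates over $x$ first, and only then applies Rogers/Siegel--type mean value formulas to the resulting lattice sums; it is for this integrated quantity that the balancing yielding the threshold $c_0$ (the root of $1+x+\ln x=0$) takes place. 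Your sketch instead asserts a strictly stronger pointwise statement and then integrates, which silently assumes more than Schmidt proves. A secondary issue is the Fubini step: you integrate over a fundamental domain $F$ that itself depends on $\Lambda$, which is not legitimate as written; the standard repair is to integrate over a fixed large cube and normalize (or, equivalently, to randomize an independent uniform shift). With those two points corrected, the remaining steps — Markov averaging to extract one good lattice, then undoing the normalization — are indeed routine, but by then the lemma is essentially a restatement of Schmidt's theorem, which is precisely what the paper claims and no more.
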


By this lemma, for a given $K$ there is a lattice $\Lambda$ such that 
$(1+\lfloor n\ln n\rfloor^{-1})^{-1}K+\Lambda$ covers $\Ren$ with the exception 
of a set whose density is at most $e^{-cn+1}$.

\begin{lem}\label{lem:squaredensity}
 If, for some finite set $T$, $K+\Lambda+T$ is an arrangement of $K$ with 
density $1-\delta$, then there is a vector $t\in\Ren$ such that the arrangement 
$K+\Lambda+T^\prime$ has density at least $1-\delta^2$, where 
$T^\prime=T\cup(T+t)$.
\end{lem}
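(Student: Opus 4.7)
The plan is an averaging argument over translations $t$ in a fundamental domain of $\Lambda$. Let $U=\bigcup_{v\in\Lambda,s\in T}(K+v+s)$ be the covered set and $V=\Ren\setminus U$ its complement; both are $\Lambda$-periodic, and the hypothesis that $K+\Lambda+T$ has density $1-\delta$ means that $\vol(V\cap F)=\delta\vol(F)$ for a fundamental domain $F$ of $\Lambda$. Enlarging $T$ to $T'=T\cup(T+t)$ replaces $U$ by $U\cup(U+t)$, whose complement is exactly $V\cap(V+t)$, again $\Lambda$-periodic. So it suffices to exhibit $t$ with
\[
 \vol\bigl((V\cap(V+t))\cap F\bigr)\leq \delta^2\vol(F).
\]

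I would produce such a $t$ by averaging over $t\in F$ and applying Fubini:
\[
 \int_F\vol\bigl((V\cap(V+t))\cap F\bigr)\,\di t
 =\int_F\mathbf 1_V(x)\left(\int_F\mathbf 1_V(x-t)\,\di t\right)\di x.
\]
The key observation is that for each fixed $x\in\Ren$, the set $x-F$ is again a fundamental domain of $\Lambda$, and $\mathbf 1_V$ is $\Lambda$-periodic, so the inner integral equals $\vol(V\cap(x-F))=\delta\vol(F)$, independently of $x$. Hence the double integral is $\delta\vol(F)\cdot\int_F\mathbf 1_V(x)\,\di x=\delta^2\vol(F)^2$, and the average value of the integrand over $t\in F$ is $\delta^2\vol(F)$. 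Some $t\in F$ therefore attains a value at most $\delta^2\vol(F)$, which gives density at least $1-\delta^2$ for $K+\Lambda+T'$.

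There is no real obstacle here; the only point requiring care is that one must invoke $\Lambda$-periodicity of $V$ when computing $\int_F\mathbf 1_V(x-t)\,\di t$, since for fixed $x\in F$ the variable $x-t$ does not remain in $F$, but it does sweep out the fundamental domain $x-F$ on which $V$ has the same measure by periodicity. Once this point is made, the conclusion is immediate.
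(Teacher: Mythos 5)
Your argument is correct and is precisely the averaging over a fundamental domain that the paper sketches in one sentence; you have simply carried out the Fubini computation it alludes to, including the one genuinely delicate point (that for fixed $x$ the variable $x-t$ sweeps out the fundamental domain $x-F$, so $\Lambda$-periodicity of $V$ makes the inner integral equal $\delta\vol(F)$ independently of $x$). Nothing to add or correct.
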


The proof of Lemma~\ref{lem:squaredensity} relies on considering the density of 
$\Ren\setminus(K+\Lambda+T^\prime)$ as a function of $t$, and averaging it over 
the fundamental domain of $\Lambda$.

To prove Theorem~\ref{thm:ftg}, we pick an appropriate $c$ for 
Lemma~\ref{lem:SchCor}, and using
Lemma~\ref{lem:squaredensity} roughly $\log_2(c^{-1}\ln n)$ times, we obtain a 
finite set $T$ of size about $c^{-1}\ln n$ such that $(1+\lfloor n\ln 
n\rfloor^{-1})^{-1}K+\Lambda+T$ has density about $n\ln n$ with the uncovered 
part being of density at most of order $(n\ln n)^{-n}$. Finally, one can verify 
that $K+\Lambda+T$ is a covering of space with the desired density.

So far, we presented three probabilistic methods that yield economical 
coverings. In the next section, we present a fourth method, which is not 
random. Instead, it relies on an algorithmic combinatorial result.

\section{A fractional approach}\label{sec:fractional}

\subsection{A few words of combinatorics}\label{subsec:combinatorics}
We recall some notions from the theory of hypergraphs.

\begin{defn}\label{defn:fraccovgen}
 Let $\Lambda$ be a set, $\HH$ a family of subsets of $\Lambda$. A 
\emph{covering} of $\Lambda$ by $\HH$ is a subset of $\HH$ whose union is 
$\Lambda$. 
The \emph{covering number} $\tau(\Lambda,\HH)$ of $\Lambda$ by $\HH$ is the 
minimum cardinality of its coverings by $\HH$.

A \emph{fractional covering} of $\Lambda$ by $\HH$ is a measure $\mu$ on $\HH$ 
with 
\[
\mu(\{H\in\HH\st p\in H\})\geq 1\;\;\;\mbox{ for all } p\in \Lambda.
\]
The \emph{fractional covering number} of $\HH$ is
\[
 \tau^\ast(\Lambda,\HH)=\inf\left\{\mu(\HH)\st \mu \mbox{ is a fractional 
covering of 
} \Lambda \mbox{ by } \HH\right\}.
\]
\end{defn}

When $\Lambda$ is a finite set, finding the value of $\tau(\Lambda,\HH)$ is an 
integer programming problem. Indeed, we assign a variable $x_H$ to each member 
$H$ of $\HH$, and set $x_H$ to 1 if $H$ is in the covering, and 0 otherwise. 
Each element $p$ of $\Lambda$ yields an inequality: $\sum_{p\in H\in\HH}x_H\geq 
1$.

Computing $\tau^\ast(\Lambda,\HH)$ is the linear relaxation of the above 
integer programming problem. For more on (fractional) coverings, cf. 
\cite{Fu88} in the abstract (combinatorial) setting and \cites{PaAg95,Ma02} in 
the geometric setting.

The gap between $\tau$ and $\tau^\ast$ is bounded in the case of finite set 
families (hypergraphs) by the following result of Lovász \cite{Lo75} and 
Stein\cite{St74}.

\begin{lem}[Lovász \cite{Lo75}, Stein\cite{St74}]\label{lem:Lovasz}
 For any finite $\Lambda$ and $\HH\subseteq 2^\Lambda$ we have
  \begin{equation}\label{eq:LovaszIG}
    \tau(\Lambda,\HH) < (1+\ln(\max_{H\in \HH}\card{H}))\tau^\ast(\Lambda,\HH).
  \end{equation}
  Furthermore, the greedy algorithm (always picking the set that covers the 
largest number of uncovered points) yields a covering of cardinality less than 
the 
right hand side in \eqref{eq:LovaszIG}.
\end{lem}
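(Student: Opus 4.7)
The plan is to analyze the greedy algorithm via a ``charging'' argument against an optimal fractional cover. Fix a fractional cover $\mu$ with $\mu(\HH)=\tau^\ast(\Lambda,\HH)$, abbreviate $\tau^\ast=\tau^\ast(\Lambda,\HH)$, and write $d=\max_{H\in\HH}\card{H}$.

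Let the greedy algorithm pick hyperedges $H_1,H_2,\ldots,H_t$. For each $p\in\Lambda$, if $p$ is first covered at step $j$, define its charge $c_p=1/k_j$, where $k_j$ is the number of newly covered elements at step $j$. By construction $\sum_{p\in\Lambda}c_p=t$, the total number of greedy steps.

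The central step is a per-edge bound: for every $H\in\HH$, $\sum_{p\in H}c_p\leq\sum_{k=1}^{\card{H}}\frac{1}{k}$. Order the elements of $H$ by the time they become covered, say $q_1,\ldots,q_{\card{H}}$; just before $q_i$ is covered, the elements $q_i,q_{i+1},\ldots,q_{\card{H}}$ of $H$ are all still uncovered, so $H$ itself would cover at least $\card{H}-i+1$ new elements at that step, and hence the edge greedy actually picks covers at least that many. Thus $c_{q_i}\leq 1/(\card{H}-i+1)$ and the sum telescopes.

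Finally I would aggregate against the fractional cover: using $\mu(\{H\st p\in H\})\geq 1$ and swapping the order of summation,
\[
 t=\sum_{p\in\Lambda}c_p\leq\sum_{p\in\Lambda}c_p\cdot\mu(\{H\st p\in H\})=\sum_{H\in\HH}\mu(H)\sum_{p\in H}c_p\leq\Bigl(\sum_{k=1}^d\frac{1}{k}\Bigr)\tau^\ast.
\]
Since $\sum_{k=1}^d 1/k<1+\ln d$ for $d\geq 2$ (the trivial case $d=1$ forces $\tau=\tau^\ast=\card{\Lambda}$), this gives the desired bound, realised by greedy. The step I expect to need the most care is the per-edge charge bound: it relies on greedy always picking a \emph{maximum} hyperedge, which is what lets one control the charges even on elements of edges that were never chosen.
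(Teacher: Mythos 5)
Your proof is correct and is precisely the classical Lovász greedy--charging analysis; the paper states the lemma with a citation to Lovász and Stein and gives no proof, so there is no in-paper argument to compare against, and your route is the intended one. The one point worth flagging is the degenerate case $d=\max_{H\in\HH}\card{H}=1$: there the strict inequality in \eqref{eq:LovaszIG} genuinely fails (as you note, $\tau=\tau^\ast=\card{\Lambda}$), so the lemma is implicitly understood with $d\geq 2$; your per-edge bound $\sum_{p\in H}c_p\leq\sum_{k=1}^{\card{H}}1/k$ and the final swap of summation against the optimal fractional cover are both sound.
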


We note that a probabilistic argument yields a slightly different bound on the 
covering number:
\begin{equation}\label{eq:probtaubound}
    \tau(\Lambda,\HH) \leq \left\lfloor 1+\frac{\ln 
\card{\Lambda}}{-\ln\left(1-\frac{1}{\tau^\ast}\right)} 
\right\rfloor,
\end{equation}
with the notation $\tau^\ast=\tau^\ast(\Lambda,\HH)$. When we do not have an 
upper bound on $\max_{H\in \HH}\card{H}$ better than $\card{\Lambda}$, then 
\eqref{eq:probtaubound} is a bit better than \eqref{eq:LovaszIG}.

To prove \eqref{eq:probtaubound}, let $\mu$ be fractional covering of $\Lambda$ 
by $\HH$ such that $\mu(\HH)=\tau^\ast+\varepsilon$, where $\varepsilon>0$ is 
very small. We normalize $\mu$ to obtain the probability measure 
$\nu=\mu/\mu(\HH)$ on $\HH$. Let $m$ denote the right hand side in 
\eqref{eq:probtaubound}, and pick $m$ members of $\HH$ randomly according to 
$\nu$. Then we have
\[
 \Pe\left(\exists u\in\Lambda\st u \mbox{ is not covered}\right)\leq
 \card{\Lambda}\left(1-\frac{1}{\tau^\ast+\varepsilon}\right)^m<1.
\]
Thus, with positive probability, we have a covering.

We will need the duals of these notions as well. Let $\Lambda$ be a set and 
$\HH$ be 
a family of subsets of $\Lambda$. The \emph{dual} of this set family is another 
set 
family, whose base set is $\HH$, and the set family on $\HH$ is 
$\HH^\ast=\big\{\{H\in\HH\st p\in H\}\st p\in \Lambda\big\}$. 

We call a set $T\subset \Lambda$ a \emph{transversal} to the set family $\HH$, 
if $T$ 
intersects each member of $\HH$. One may define \emph{fractional transversals} 
in the obvious way, and then define the \emph{(fractional) transversal number}.

Clearly $\GG\subset\HH$ is a covering of $\Lambda$ if and only if, $\GG$ is a 
transversal to $\HH^\ast$. Fractional coverings and fractional transversals are 
dual notions in the same manner. We leave it as an exercise (which will be 
needed later) to formulate the dual of Lemma~\ref{lem:Lovasz} and of 
\eqref{eq:probtaubound}.

\subsection{The fractional covering number}\label{subsec:fraccovvnumber}

Motivated by the above combinatorial notions, the fractional 
version of $N(K,\inter K)$ (which is the illuminaton number of $K$, see 
Section~\ref{sec:illum}) first appeared in \cite{Na09}, and in general for 
$N(K,L)$ in \cite{AR11} and \cite{AS}.

\begin{defn}\label{defn:fraccovcvx}
 Let $K$ and $L$ be bounded Borel measurable sets in $\Ren$. A \emph{fractional 
covering} of $K$ by translates of $L$ is a Borel measure $\mu$ on $\Ren$ with 
$\mu(x-L)\geq 1$ for all $x\in K$. The \emph{fractional covering number} of $K$ 
by translates of $L$ is
\[
 N^\ast(K,L)=
 \]\[\inf\left\{\mu(\Ren)\st \mu \mbox{ is a fractional covering of } K 
\mbox{ by translates of } L\right\}.
\]
\end{defn}

Clearly,
\begin{equation}\label{eq:obviousfracbound}
N^\ast(K,L)\leq N(K,L).
\end{equation}

In Definition~\ref{defn:fraccovcvx} we may assume that a fractional 
cover $\mu$ is supported on $\cl(K-L)$. 
According to Theorem~1.7 of \cite{AS}, we have
\begin{equation}\label{eq:simpleupperbound}
\max\left\{\frac{\vol(K)}{\vol(L)},1\right\}\leq 
N^\ast(K,L)\leq
\frac{\vol(K-L)}{\vol(L)}.
\end{equation}

The second inequality is easy to see: the 
Lebesgue measure restricted to $K-L$ with the following scaling 
$\mu=\vol/\vol(L)$ is a fractional covering of $K$ by translates of $L$. To 
prove the first inequality, assume that $\mu$ is a fractional covering of $K$ 
by translates of $L$. Then
\[
 \vol(L)\mu(\Ren)=\int_{\Ren} \vol(L)\di 
\mu(x)=\int_{\Ren}\int_{\Ren}\chi_{L}(y-x)\di y\di\mu(x)=
\]\[
 \int_{\Ren}\int_{\Ren} \chi_{L}(y-x)\di\mu(x)\di y=
\int_{\Ren}\mu(y-L) \di y\geq
 \int_{\Ren}\chi_K(y) \di y=\vol(K).
\]

We recall from \ref{subsec:combinatorics} that computing $N$ means solving an 
integer programming problem (though, in this situation, with infinitely many 
variables), and 
computing $N^\ast$ is its linear relaxation. The linear relaxation is 
usually easier to solve, so having an inequality bounding $N$ from above by 
some function of $N^\ast$ is desirable. It is open whether such inequality 
exists in general for convex sets. More precisely, we do not know if there is a 
function $f$ such that for any dimension $n$, and any convex bodies $K$ and $L$ 
in $\Ren$, we have $N(K,L)\leq f(n, N^\ast(K,L))$.

Using a probabilistic argument, Artstein--Avidan and Slomka \cite{AS} found a 
bound of $N(K,L)$ in terms of $N^\ast(K^\prime,L^\prime)$, where $K^\prime$ and 
$L^\prime$ are almost $K$ and $L$. A somewhat stronger bound was obtained in 
\cite{N15} by a non-probabilistic proof. 
For two sets $K, T\subset\Ren$, we denote their \emph{Minkowski difference} by 
$K\sim T=\{x\in \Ren\st T+x \subseteq K\}$.

\begin{thm}[Artstein--Avidan and Slomka \cite{AS}, Nasz\'odi 
\cite{N15}]\label{thm:cvxIG}
  Let $K, L$ and $T$ be bounded Borel measurable sets in $\Ren$ and let 
$\Lambda\subset\Ren$ be a finite set with $K\subseteq \Lambda+T$. Then
  \begin{equation}\label{eq:cvxIG}
  N(K,L)\leq 
  \end{equation}
\[ 
  (1+\ln(
  \max_{x\in K-L} \card{(x+(L\sim T))\cap \Lambda }
  ))
  \cdot N^\ast(K-T,L\sim T).
\]
  If $\Lambda\subset K$, then we have
  \begin{equation}\label{eq:cvxIGspec}
  N(K,L)\leq 
  \end{equation}
\[  (1+\ln(
  \max_{x\in K-L} \card{(x+(L\sim T))\cap \Lambda }
  ))
  \cdot N^\ast(K,L\sim T).
\]
\end{thm}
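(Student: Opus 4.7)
The plan is to recast the geometric covering problem as a covering problem on the finite set $\Lambda$ and then invoke the Lov\'asz--Stein bound (Lemma~\ref{lem:Lovasz}). Introduce the hypergraph $\HH$ on base $\Lambda$ whose edges are
\[
H_c:=(c+(L\sim T))\cap\Lambda,\qquad c\in\Ren.
\]
The geometric identity driving the reduction is the following: if $\lambda\in H_c$, then $(\lambda-c)+T\subseteq L$ by the definition of the Minkowski difference, and the hypothesis $K\subseteq\Lambda+T$ writes any $k\in K$ as $k=\lambda+t$ with $\lambda\in\Lambda$, $t\in T$; these combine to give $k-c=(\lambda-c)+t\in L$, i.e.\ $k\in c+L$. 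Hence every hypergraph cover of $\Lambda$ by $H_{c_1},\dots,H_{c_N}$ produces a translative cover of $K$ by $c_1+L,\dots,c_N+L$, so $N(K,L)\leq\tau(\Lambda,\HH)$.

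I would then bound $\tau^\ast(\Lambda,\HH)$. By discarding any $\lambda\in\Lambda$ with $(\lambda+T)\cap K=\emptyset$ (such points are useless for $K\subseteq\Lambda+T$), I reduce to $\Lambda\subseteq K-T$. If $\mu$ is a fractional cover of $K-T$ by translates of $L\sim T$, then for each $\lambda\in\Lambda\subseteq K-T$,
\[
\mu\bigl(\lambda-(L\sim T)\bigr)=\mu\bigl(\{c\st\lambda\in H_c\}\bigr)\geq 1,
\]
and pushing $\mu$ forward along $c\mapsto H_c$ (a finitely-valued map, as $\Lambda$ is finite) yields a fractional cover of $\Lambda$ in $\HH$ of the same total mass. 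Thus $\tau^\ast(\Lambda,\HH)\leq N^\ast(K-T,L\sim T)$; in the variant where $\Lambda\subset K$, the same manipulation with $\mu$ a fractional cover of $K$ itself gives $\tau^\ast(\Lambda,\HH)\leq N^\ast(K,L\sim T)$.

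The step requiring the most care, and in my view the main technical content, is confining the hyperedges actually used to those indexed by $c\in K-L$, so that the logarithm entering the Lov\'asz--Stein bound is controlled by $\max_{x\in K-L}|(x+(L\sim T))\cap\Lambda|$ rather than by the supremum over all centers. The key inclusion is
\[
\lambda-(L\sim T)\;\subseteq\;K-L\qquad\text{for every }\lambda\in\Lambda\subseteq K-T,
\]
which follows by writing $c=\lambda-\ell$ with $\ell+T\subseteq L$ and, picking $t\in T$ with $\lambda+t\in K$, rewriting $c=(\lambda+t)-(\ell+t)\in K-L$. Consequently the restriction $\tilde\mu:=\mu|_{K-L}$ remains a fractional cover of $\Lambda$ in the sub-hypergraph $\HH':=\{H_c\st c\in K-L\}$, with mass at most $\mu(\Ren)$. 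Applying Lemma~\ref{lem:Lovasz} to the (finite) hypergraph of distinct edges of $\HH'$, whose largest edge has size at most $\max_{x\in K-L}|(x+(L\sim T))\cap\Lambda|$, yields~\eqref{eq:cvxIG}, and the same argument delivers~\eqref{eq:cvxIGspec}.
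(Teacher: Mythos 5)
Your proof is correct and takes essentially the same route as the paper: both reduce the problem to a finite hypergraph on $\Lambda$ with edges $(c+(L\sim T))\cap\Lambda$, observe that a hypergraph cover of $\Lambda$ induces a translative cover of $K$ (since $K\subseteq\Lambda+T$ and $(L\sim T)+T\subseteq L$), bound the fractional cover and the edge sizes by confining to centers $c\in K-L$, and invoke the Lov\'asz--Stein Lemma~\ref{lem:Lovasz}. The paper packages this as Observation~\ref{obs:IG} followed by a terse substitution, whereas you unfold the same substitution explicitly, so the two proofs coincide in substance.
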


We sketch a proof of Theorem~\ref{thm:cvxIG} in \ref{subsec:fracproof}.

For a set $K\subset\Ren$ and $\delta>0$, we denote the \emph{$\delta$-inner 
parallel body} of $K$ by $K_{-\delta}:=K\sim \delta\B^n_2=\{x\in K\st 
x+\delta\B^n_2\subseteq K\}$. As an application of Theorem~\ref{thm:cvxIG}, 
one quickly obtains the following result which, in turn, may be used to give a 
simple proof of Rogers' result, Theorem~\ref{thm:Rogers}.

\begin{thm}[Nasz\'odi \cite{N15}]\label{thm:Renbyanything}
Let $K\subseteq\Ren$ be a bounded measurable set.
Then there is a covering of $\Ren$ by translated copies of $K$ of density at 
most
\[
 \inf_{\delta>0}\left[
  \frac{\vol(K)}{\vol(K_{-\delta})}
  \left( 
1+\ln\frac{\vol\left(K_{-\delta/2}\right)}{\vol\left(\frac{\delta}{2}
\B^n_2\right)
}\right)\right].
\]
\end{thm}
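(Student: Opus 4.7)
The plan is to apply the sharper bound~\eqref{eq:cvxIGspec} of Theorem~\ref{thm:cvxIG} with the large cube $C=[0,R]^n$ playing the role of ``$K$'', the set $K$ itself playing the role of ``$L$'', and $T=\delta\B_2^n$, and then to pass from a covering of $C$ to a covering of $\Ren$ by periodizing with the lattice $R\Ze^n$ and sending $R\to\infty$.

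For the index set $\Lambda$ required by~\eqref{eq:cvxIGspec}, I would take a saturated $\delta$-separated subset of $C$: a maximal set of points of $C$ that are pairwise at distance at least $\delta$. Such a $\Lambda$ is finite, lies in $C$, and by maximality satisfies $C\subseteq\Lambda+\delta\B_2^n$, so the hypothesis of~\eqref{eq:cvxIGspec} holds with $T=\delta\B_2^n$. Moreover, pairwise $\delta$-separation forces the open balls $p+(\delta/2)\inter\B_2^n$, $p\in\Lambda$, to be pairwise disjoint, so $\Lambda$ plays the dual role of a covering set and a packing set. Noting that $L\sim T=K\sim\delta\B_2^n=K_{-\delta}$ and using the elementary inclusion $K_{-\delta}+(\delta/2)\B_2^n\subseteq K_{-\delta/2}$ (immediate from the definition of the inner parallel body), I would bound, for each $x\in C-K$,
\[
\card{\Lambda\cap(x+K_{-\delta})}\leq \frac{\vol(K_{-\delta/2})}{\vol(\tfrac{\delta}{2}\B_2^n)},
\]
since the disjoint packing balls $p+(\delta/2)\B_2^n$ of the points being counted all fit inside $x+K_{-\delta/2}$.

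Combining this estimate with~\eqref{eq:cvxIGspec} and the trivial upper bound $N^\ast(C,K_{-\delta})\leq\vol(C-K_{-\delta})/\vol(K_{-\delta})$ from~\eqref{eq:simpleupperbound} yields
\[
N(C,K)\leq \left(1+\ln\frac{\vol(K_{-\delta/2})}{\vol(\tfrac{\delta}{2}\B_2^n)}\right)\cdot\frac{\vol(C-K_{-\delta})}{\vol(K_{-\delta})}.
\]
Replicating the resulting covering of $C$ on every cell of the lattice $R\Ze^n$ produces a periodic covering of $\Ren$ by translates of $K$ of density $N(C,K)\,\vol(K)/R^n$. Since $\vol(C-K_{-\delta})=R^n+O(R^{n-1})$ as $R\to\infty$ (the Minkowski perturbation contributing only a boundary term), letting $R\to\infty$ and then taking the infimum over $\delta>0$ gives exactly the density bound claimed.

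The only step beyond bookkeeping is the geometric setup in the second paragraph: arranging for $\Lambda$ to serve both as a $\delta$-net in $C$ (so that~\eqref{eq:cvxIGspec} applies with $T=\delta\B_2^n$) and simultaneously as a $\delta$-packing (so that the combinatorial factor from~\eqref{eq:cvxIG} collapses to the volume ratio appearing inside the logarithm in the statement). Once this saturated packing is in place, the inclusion between inner parallel bodies and the cube-boundary estimate finish the proof.
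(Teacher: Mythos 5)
Your proof is correct and follows the route the paper itself indicates — applying Theorem~\ref{thm:cvxIG} (specifically the version~\eqref{eq:cvxIGspec}) with a large cube in the role of the covered set, and then periodizing and letting $R\to\infty$. The one genuinely clever step, choosing $\Lambda$ to be a saturated (maximal) $\delta$-separated set so that it simultaneously furnishes the $\delta$-net needed for the hypothesis $C\subseteq\Lambda+\delta\B_2^n$ and the $\delta/2$-packing needed to bound the combinatorial factor $\max_x\card{(x+K_{-\delta})\cap\Lambda}$ by $\vol(K_{-\delta/2})/\vol\bigl(\frac{\delta}{2}\B_2^n\bigr)$ via the inclusion $K_{-\delta}+\frac{\delta}{2}\B_2^n\subseteq K_{-\delta/2}$, is exactly the intended device, and the remaining bookkeeping ($N^\ast(C,K_{-\delta})\leq\vol(C-K_{-\delta})/\vol(K_{-\delta})$ and the $R^n+O(R^{n-1})$ cube-boundary estimate) is handled correctly.
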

The theorem still holds if the $\delta$-inner parallel body is defined with 
respect to any norm in place of the Euclidean.

\subsection{Proof of Theorem~\ref{thm:cvxIG}}\label{subsec:fracproof}
The proofs outlined so far were all probabilistic in nature. In this one, the 
role that  probability plays elsewhere is played by the following 
straightforward corollary to Lemma~\ref{lem:Lovasz}.

\begin{obs}\label{obs:IG}
Let $Y$ be a set, $\FF$ a family of subsets of $Y$, and $X\subseteq Y$. 
Let $\Lambda$ be a finite subset of $Y$ and $\Lambda\subseteq U\subseteq Y$. 
Assume 
that for another family $\FF^\prime$ of subsets of $Y$ we have 
$\tau(X,\FF)\leq\tau(\Lambda,\FF^\prime)$. Then
  \begin{equation}\label{eq:IG}
  \tau(X,\FF)\leq 
  \tau(\Lambda,\FF^\prime)\leq
  (1+\ln( \max_{F^\prime\in\FF^\prime} \card{\Lambda\cap F^\prime} ) ) 
  \cdot \tau^\ast(U, \FF^\prime).
  \end{equation}
\end{obs}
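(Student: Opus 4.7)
The plan is to read the statement as a two-step chain: the first inequality $\tau(X,\FF)\leq \tau(\Lambda,\FF^\prime)$ is nothing but the hypothesis, so all the work is in bounding $\tau(\Lambda,\FF^\prime)$ by $(1+\ln(\max_{F^\prime}\card{\Lambda\cap F^\prime}))\tau^\ast(U,\FF^\prime)$. To do this, I would first reduce to a genuinely finite hypergraph problem so that Lemma~\ref{lem:Lovasz} applies, and then compare the fractional covering number on $U$ with the one on $\Lambda$.

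First, I would restrict $\FF^\prime$ to $\Lambda$: let $\FF^\prime|_\Lambda := \{\Lambda\cap F^\prime \st F^\prime\in\FF^\prime\}$. Since $\Lambda$ is finite, $\FF^\prime|_\Lambda$ is a finite subfamily of $2^\Lambda$, and a subfamily of $\FF^\prime$ covers $\Lambda$ if and only if its image in $\FF^\prime|_\Lambda$ does, hence $\tau(\Lambda,\FF^\prime)=\tau(\Lambda,\FF^\prime|_\Lambda)$. Applying Lemma~\ref{lem:Lovasz} to this finite hypergraph gives
\[
\tau(\Lambda,\FF^\prime|_\Lambda) < \left(1+\ln\left(\max_{F^\prime\in\FF^\prime}\card{\Lambda\cap F^\prime}\right)\right)\cdot\tau^\ast(\Lambda,\FF^\prime|_\Lambda),
\]
which is already the right type of inequality, except that the fractional covering number on the right is with respect to $\Lambda$ and $\FF^\prime|_\Lambda$ rather than $U$ and $\FF^\prime$.

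Second, I would show the monotonicity $\tau^\ast(\Lambda,\FF^\prime|_\Lambda)\leq \tau^\ast(U,\FF^\prime)$. Given any fractional covering $\mu$ of $U$ by $\FF^\prime$, push it forward under the restriction map $F^\prime\mapsto F^\prime\cap\Lambda$ to obtain a measure $\mu^\prime$ on $\FF^\prime|_\Lambda$ of the same total mass. For any $p\in\Lambda\subseteq U$, the fibre $\{F^\prime\in\FF^\prime\st p\in F^\prime\}$ coincides with the preimage of $\{H\in\FF^\prime|_\Lambda\st p\in H\}$, so $\mu^\prime(\{H\st p\in H\})\geq 1$, i.e.\ $\mu^\prime$ is a fractional covering of $\Lambda$ by $\FF^\prime|_\Lambda$. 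Taking the infimum yields the claimed inequality. Chaining the two displayed inequalities gives the conclusion.

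There is essentially no hard step here; the only thing to be a little careful about is that $\FF^\prime$ is a family of subsets of $Y$ rather than of $\Lambda$, so one must make the restriction and the pushforward explicit in order to invoke the combinatorial Lovász--Stein lemma, whose statement is for hypergraphs on a finite ground set. Once that bookkeeping is done, the observation follows immediately.
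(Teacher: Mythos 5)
Your proof is correct, and it follows what is essentially the only reasonable route: the first inequality is the hypothesis, and the second is obtained by restricting $\FF^\prime$ to the finite ground set $\Lambda$, applying the Lov\'asz--Stein bound (Lemma~\ref{lem:Lovasz}) to the resulting finite hypergraph, and then observing that the fractional covering number can only drop when you pass from $(U,\FF^\prime)$ to $(\Lambda,\FF^\prime|_\Lambda)$ via pushforward of a fractional cover along $F^\prime\mapsto F^\prime\cap\Lambda$. The paper states the observation as a ``straightforward corollary'' of Lemma~\ref{lem:Lovasz} without supplying details, so the bookkeeping you carry out (restriction to $\Lambda$, equality $\tau(\Lambda,\FF^\prime)=\tau(\Lambda,\FF^\prime|_\Lambda)$ via lifting, and monotonicity of $\tau^\ast$) is exactly the content that the paper elides; nothing is missing and nothing is done differently in substance.
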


The proof is simply a substitution into \eqref{eq:IG}. We set $Y=\Ren$, $X=K$, 
$\FF=\{L+x\st x\in K-L\}$, $\FF^\prime=\{L\sim T+x\st x\in K-L \}$. One can 
use $U=K-T$, as any member of $\Lambda$ not in $K-T$ could be 
dropped from $\Lambda$ and $\Lambda$ would still have the property that 
$\Lambda +T \supseteq K$. That proves \eqref{eq:cvxIG}. To prove 
\eqref{eq:cvxIGspec}, we notice that in the case when $\Lambda\subset K$, 
one can take $U=K$.

\subsection{Detour: Covering the sphere by caps}
To illustrate the applicability of the method that yields 
Theorem~\ref{thm:cvxIG}, we turn to coverings on the sphere.
We denote the closed spherical cap of spherical radius $\phi$ centered at 
$u\in\Senm$ by $C(u,\phi)=\{v\in\Senm\st <u,v>\geq\cos\phi\}$, and its 
probability measure by $\Omega(\phi)=\sigma(C(u,\phi))$. For a set 
$K\subset\Senm$ and $\delta>0$, we denote the \emph{$\delta$--inner parallel 
body} of $K$ by $K_{-\delta}=\{u\in K\st C(u,\delta)\subseteq K\}$.

A set $K\subset\Senm$ is called \emph{spherically convex}, if it is contained 
in 
an open hemisphere and for any two of its points, it contains the shorter great 
circular arc connecting them.

The \emph{spherical circumradius} of a subset of an open hemisphere of $\Senm$ 
is the spherical radius of the smallest spherical cap (the \emph{circum-cap}) 
that contains the set. A proof mimicking the proof of Theorem~\ref{thm:cvxIG} 
yields

\begin{thm}[Nasz\'odi \cite{N15}]\label{thm:spherebyanything}
Let $K\subseteq\Senm$ be a measurable set.
Then there is a covering of $\Senm$ by rotated copies of $K$ of density at most
\[
 \inf_{\delta>0}\left[
  \frac{\sigma(K)}{\sigma(K_{-\delta})}
  \left( 
1+\ln\frac{\sigma\left(K_{-\delta/2}\right)}{\Omega\left(\frac{\delta}{2}\right)
}\right)\right].
\]
\end{thm}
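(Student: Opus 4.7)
The plan is to transplant the proof of Theorem~\ref{thm:cvxIG} outlined in~\ref{subsec:fracproof} from $\Ren$ to $\Senm$, substituting translations by rotations, Lebesgue measure by $\sigma$, and Euclidean balls by spherical caps. Fix $\delta>0$; the goal is a covering of $\Senm$ by rotated copies of $K$ whose density is at most the expression inside the infimum.

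First I would discretize: let $\Lambda\subset\Senm$ be a maximal set of pairwise spherical distance at least $\delta$. Maximality makes $\Lambda$ a $\delta$-net, while the separation makes the open caps $\{C(\lambda,\delta/2):\lambda\in\Lambda\}$ pairwise disjoint, so in particular $\Lambda$ is finite. The central reduction is: if rotations $R_1,\ldots,R_m\in SO(n)$ satisfy $\Lambda\subseteq\bigcup_i R_iK_{-\delta}$, then already $\Senm=\bigcup_i R_iK$. Indeed, given $x\in\Senm$, pick $\lambda\in\Lambda$ with $d(x,\lambda)\leq\delta$ and an index $i$ with $R_i^{-1}\lambda\in K_{-\delta}$; then $C(R_i^{-1}\lambda,\delta)\subseteq K$, and since rotations are spherical isometries and $d(R_i^{-1}x,R_i^{-1}\lambda)\leq\delta$, one concludes $x\in R_iK$.

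Next I would feed this into Observation~\ref{obs:IG}, taking $X=\Senm=U$, $\FF=\{RK:R\in SO(n)\}$, $\FF^\prime=\{RK_{-\delta}:R\in SO(n)\}$, and $\Lambda$ as above. Two estimates remain. For $\tau^\ast(\Senm,\FF^\prime)$, I would use the Haar probability measure on $SO(n)$ rescaled by $1/\sigma(K_{-\delta})$: for any fixed $p\in\Senm$, the map $R\mapsto R^{-1}p$ pushes Haar forward to $\sigma$, so the Haar mass of $\{R:p\in RK_{-\delta}\}$ equals $\sigma(K_{-\delta})$, and the rescaling is a fractional covering of total mass $1/\sigma(K_{-\delta})$. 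For the maximum edge size, disjointness of the $\delta/2$-caps around $\Lambda$ combined with the inclusion $(K_{-\delta})_{+\delta/2}\subseteq K_{-\delta/2}$ (an immediate consequence of the spherical triangle inequality) gives
\[
|\Lambda\cap RK_{-\delta}|\cdot\Omega(\delta/2)\leq\sigma\bigl((RK_{-\delta})_{+\delta/2}\bigr)\leq\sigma(K_{-\delta/2}).
\]
Combining, the number of rotations is at most $(1+\ln[\sigma(K_{-\delta/2})/\Omega(\delta/2)])/\sigma(K_{-\delta})$; multiplying by $\sigma(K)$ yields the claimed density, and $\inf_{\delta>0}$ finishes the proof.

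The main subtlety is simply verifying that the transposed ingredients still behave on the sphere: rotations commute with inner and outer parallel operations, the inclusion $(K_{-\delta})_{+\delta/2}\subseteq K_{-\delta/2}$ follows from the spherical triangle inequality alone (so no convexity of $K$ is used), and Observation~\ref{obs:IG} applies despite the infinite parameterization of edges by $SO(n)$, since the finiteness of $\Lambda$ forces only finitely many distinct intersections $\Lambda\cap RK_{-\delta}$ to arise. None of these steps is deep, but they are the places where the spherical setting most differs in flavour from its Euclidean counterpart.
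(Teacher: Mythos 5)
Your proposal is correct and follows exactly the route the paper indicates, namely transplanting the proof of Theorem~\ref{thm:cvxIG} to $\Senm$: you take $\Lambda$ to be a maximal $\delta$-separated set (a $\delta$-net with disjoint $\delta/2$-caps), reduce covering $\Senm$ by rotated $K$ to covering $\Lambda$ by rotated $K_{-\delta}$, bound the fractional covering number by $1/\sigma(K_{-\delta})$ via the pushed-forward Haar measure, bound $\max|\Lambda\cap RK_{-\delta}|$ using the disjoint caps and the triangle-inequality inclusion $(K_{-\delta})_{+\delta/2}\subseteq K_{-\delta/2}$, and plug into Observation~\ref{obs:IG}. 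All the verification steps you flag (no convexity needed, finiteness of $\Lambda$, applicability of the Lov\'asz--Stein bound despite the continuous parameterization by $SO(n)$) are handled correctly.
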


Improving an earlier result of Rogers \cite{Ro63}, B\"or\"oczky and Wintsche 
\cite{BW03} showed that for any $0<\varphi<\pi/2$ and dimension $n$ there is a 
covering of $\Sen$ by spherical caps of radius $\phi$ with density at most 
$n\ln n+n\ln\ln n+5n$. This result follows from 
Theorem~\ref{thm:spherebyanything}. Other bounds on covering the sphere by caps 
(or, a ball by smaller equal balls) can be found in \cite{Ve05} by 
Verger--Gaugry.

\section{The Illumination Conjecture}\label{sec:illum}

We fix a convex body $K$ in $\Ren$.
Once the covering number is defined, it is fairly natural to ask what Levi 
\cite{Lev55} asked: how large may $N(K,\inter K)$ be.  We 
will call this quantity the \emph{illumination number} of $K$, and denote it by 
$i(K)=N(K,\inter K)$. The naming will become obvious in the next paragraphs.

Following Hadwiger \cite{Ha60}, we say that a point $p\in\Ren\setminus K$ 
\emph{illuminates} a boundary point $b\in\bd K$, if the ray $\{p+\lambda 
(b-p)\st \lambda>0\}$
emanating from $p$ and passing through $b$ intersects the interior of $K$.
Boltyanski \cite{Bo60} gave the following slightly different definition. A 
direction $u\in\Se^{n-1}$ is said to 
\emph{illuminate} $K$ at a boundary point $b\in\bd K$, if the ray $\{b+\lambda 
u\st \lambda>0\}$ intersects the interior of $K$.
It is easy to see that the minimum number of directions 
that illuminate each boundary point of $K$ is equal to the minimum number of 
points that illuminate each boundary point of $K$, which in turn is equal to 
the illumination number of $K$. 

Gohberg and Markus \cite{GoMa60} asked how large $\inf\{N(K,\lambda K\st 
0<\lambda<1)\}$ can be. It also follows easily that this number is equal to 
$i(K)$.

The following dual formulation of the definition of the illumination number was 
found independently by P. Soltan, V. Soltan \cite{SolSol86} and by Bezdek 
\cite{Be91}. 
First, recall that an \emph{exposed face} of a convex body $K$ is the 
intersection of $K$ with a supporting hyperplane.
Now, let $K$ be a convex body in $\Ren$ containing the origin in its 
interior. Then $i(K)$ is the minimum size of a family of hyperplanes in $\Ren$ 
such that each exposed face of the polar $K^\ast$ of $K$ is strictly separated 
from the origin by at least one of the hyperplanes in the family (for the 
definition of $K^\ast$, see the introduction).

Any smooth convex body (ie., a convex body with a unique support hyperplane at 
each boundary point) in $\Ren$ is illuminated by $n+1$ 
directions. Indeed, for a smooth convex body, the set of directions 
illuminating a 
given boundary point is an open hemisphere of $\Senm$, and one can find 
$n+1$ points (eg., the vertices of a regular simplex) in $\Senm$ with 
the property that every open hemisphere contains at least one of the points. 
Thus, these $n+1$ points in $\Senm$ (ie., directions) illuminate any smooth 
convex body in $\Ren$. It is easy to see that no convex body is illuminated by 
less than $n+1$ directions.

On the other hand, the illumination number of the cube is $2^n$, since no two 
vertices of the cube share an illumination direction. An important unsolved 
problem in Discrete Geometry is the 
\emph{Gohberg--Markus--Levi--Boltyanski--Hadwiger Conjecture} (or, Illumination 
Conjecture), according to which \emph{for any convex body $K$ in $\Ren$, we 
have $i(K)=2^n$, where equality is attained only when $K$ is an affine image of 
the cube.}

In this section, we mention some results on illumination. For a more complete 
account of the current state of the problem, see \cites{Be06, Be10, BMP05, 
MS99, Sza97}. In Chapter VI. of \cite{BoMaSo97}, among many other facts on 
illumination, one can find a proof of the equivalence of the first four 
definitions of $i(K)$ given at the beginning of this section. Quantitative 
versions of the illumination number are discussed in Chapter {\bf 
\color{red}[Refernce to Bezdek--Kahn chapter]} of this volume.

One detail of the history of the conjecture may tell a lot about it. It was 
asked several times in different formulations (see the different definitions of 
$i(K)$ above), first in 1960 (though, Levi's study of $N(K,\inter K)$ on the 
plane is from 1955). Several partial results appeared solving the 
conjecture for special families of convex bo\-di\-es. Yet, the best general 
bound 
is an immediate consequence of Rogers' Theorem~\ref{thm:Rogers} (more 
precisely, Corollary~\ref{cor:rozo}) from 1957 combined with the 
\emph{Rogers--Shepard 
inequality} \cite{RoS57}, according to which 
$\vol(K-K)\leq\binom{2n}{n}\vol(K)$ 
for any convex body $K$ in $\Ren$.
\begin{thm}[Rogers \cite{Ro57}]
Let $K$ be a convex body in $\Ren$. Then
 \[
i(K)\leq \begin{cases} 
2^n(n\ln n+ n\ln\ln n + 5n) &\mbox{if } K=-K, \\
\binom{2n}{n}(n\ln n+ n\ln\ln n + 5n) & \mbox{otherwise.}
\end{cases} 
\]
\end{thm}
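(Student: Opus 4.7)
The plan is to deduce this bound directly from Corollary~\ref{cor:rozo} (Rogers--Zong) combined with the Rogers--Shepard inequality, exploiting the elementary fact that $i(K)=N(K,\inter K)$ can be squeezed between the numbers $N(K,\lambda K)$ for $\lambda\in(0,1)$.

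First, I would assume without loss of generality that $0\in\inter K$, which is harmless since $i(K)$ is translation invariant. A short convexity argument (using $0\in\inter K$) then shows $\lambda K\subseteq\inter K$ for every $\lambda\in(0,1)$, so any covering of $K$ by translates of $\lambda K$ is automatically a covering by translates of $\inter K$, giving
\[
i(K)=N(K,\inter K)\leq N(K,\lambda K).
\]
Feeding $L=\lambda K$ into Corollary~\ref{cor:rozo} and using $\vol(\lambda K)=\lambda^n\vol(K)$, I obtain
\[
i(K)\leq \frac{\vol(K-\lambda K)}{\lambda^n\vol(K)}\bigl(n\ln n+n\ln\ln n+n+o(n)\bigr).
\]

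Next I would control the volume ratio in the two cases. If $K=-K$, convexity yields $K-\lambda K=K+\lambda K=(1+\lambda)K$, so the ratio equals $\bigl((1+\lambda)/\lambda\bigr)^n$ and tends to $2^n$ as $\lambda\to 1^-$. For a general convex body, the inclusion $\lambda K\subseteq K$ (valid since $0\in K$ and $0<\lambda<1$) gives $K-\lambda K\subseteq K-K$; then the Rogers--Shepard inequality $\vol(K-K)\leq\binom{2n}{n}\vol(K)$ bounds the ratio by $\binom{2n}{n}/\lambda^n$, which tends to $\binom{2n}{n}$.

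The only mildly delicate point is the passage to the limit $\lambda\to 1^-$, since $i(K)$ is an integer while the bound is only approached in the limit. However, the slack between the $n+o(n)$ given by the corollary and the $5n$ demanded by the theorem easily absorbs both $\lambda^{-n}-1$ (made small by choosing $\lambda$ close enough to $1$) and the $o(n)$ error, so no careful quantitative analysis is needed. This is really the \emph{main obstacle}, yet it is purely routine: the depth of the theorem lives entirely in Rogers' covering theorem and in Rogers--Shepard, and the illumination bound is a short bookkeeping exercise once those are in hand.
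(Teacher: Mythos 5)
Your proposal is correct and is exactly the route the paper has in mind: Corollary~\ref{cor:rozo} applied with $L=\lambda K$, the identity $K-\lambda K=(1+\lambda)K$ in the symmetric case, and Rogers--Shepard together with $K-\lambda K\subseteq K-K$ in the general case, with $\lambda\to1^-$. The only cosmetic remark is that the limit step needs no ``slack'' argument at all: since $i(K)\leq N(K,\lambda K)$ holds for every $\lambda\in(0,1)$ and the right-hand side has a limit, $i(K)$ is bounded by that limit directly; and if one derives Corollary~\ref{cor:rozo} from Rogers' original Theorem~\ref{thm:Rogers} (as the paper allows) rather than from the $n+o(n)$ improvement, the constant $5n$ appears immediately with no small-$n$ caveat.
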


By \cite{Lev55}, the Illumination Conjecture holds on the plane. Papadoperakis 
\cite{Pap99} proved $i(K)\leq16$ in dimension three. The upper bound in the 
conjecture (that is, not the equality case) was verified in the following 
cases: if $K=-K\subset\Re^3$ (Lassak \cite{Las84}), if $K\subset\Re^3$ is a 
convex polyhedron with at least one non-trivial affine symmetry (Bezdek 
\cite{Be91}), if $K\subset\Re^3$ is symmetric about a plane (Dekster 
\cite{Dek00}).

\subsection{Borsuk's problem and illuminating sets of constant width 
}\label{subsec:borsuk}

The problem of illumination is closely related to another classical question in 
geometry. \emph{Borsuk's problem} \cite{Bor33} (or, Borsuk's Conjecture, 
though, he formulated it as a question) asks whether every bounded set $X$ in 
$\Ren$ can be partitioned into $n+1$ sets of diameter less than the diameter of 
$X$ (cf. \cite{Rai08} for a comprehensive survey). The minimum 
number of such parts is the \emph{Borsuk number} of $X$, and 
clearly, it is at most the illumination number of $\conv(X)$. Since any bounded 
set in $\Ren$ is contained in a set of constant width of the same diameter, it 
follows that any upper bound on the illumination number of sets of constant 
width in a certain dimension is also a bound on the maximum Borsuk number in 
the same dimension. 

The affirmative answer to Borsuk's problem in the plane was proved by Borsuk, 
then, in three--space by Perkal \cite{Per47} and Eggleston \cite{Egg55} (in the 
case of finite, three-dimensional sets, see Gr\"unbaum \cite{Gru57}, 
Heppes--R\'ev\'esz \cite{HepRev56} and Heppes \cites{Hep57}).
It was first shown by Lassak \cite{Las97} (see also \cites{Wei96, 
BLNP07}) that sets of constant width in $\Re^3$ can be illuminated by three 
pairs of opposite directions. It would be a nice alternative proof of the bound 
4 on the 
Borsuk number in three--space, if one could show that three--dimensional sets 
of 
constant width have illumination number 4 (see Conjecture~3.3.5. in 
\cite{Be10}).

In 1993 by an ingenious proof, Kahn and Kalai \cite{KahKal93} 
(based on a deep combinatorial result of Frankl and Wilson \cite{FraWil81}) 
showed that if $n$ is large enough, then there is a finite set in 
$\Ren$ whose Borsuk number is greater than $(1.2)^{\sqrt{n}}$, thus answering 
Borsuk's question in the negative. That result made the following bound on 
the illumination number by Schramm \cite{Sch88} all the more relevant. 
Currently, this is also the best general bound for the Borsuk number.
\begin{thm}[Schramm \cite{Sch88}]\label{thm:schramm}
 In any dimension $n$ for any set $W$ of constant width in $\Ren$, we have
 \[
  i(W)\leq 5n\sqrt{n}(4+\ln n)\left(\frac{3}{2}\right)^{n/2}.
 \]
\end{thm}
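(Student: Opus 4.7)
The plan is to bound $i(W)$ by $N(W, \lambda W)$ for a well-chosen $\lambda \in (0,1)$, and then to estimate this covering number using the Rogers--Zong bound, Corollary~\ref{cor:rozo}, combined with the Minkowski-arithmetic rigidity of constant width bodies. First I would normalize so that $W$ has constant width $1$ and is translated so that the common center of its inscribed and circumscribed balls sits at the origin; by Jung's theorem the circumradius satisfies $R \leq \sqrt{n/(2(n+1))}$, and the inradius equals $1-R$. Since $0 \in \inter W$, for every $\lambda \in (0,1)$ we have $\lambda W \subseteq \inter W$, so $i(W) \leq N(W, \lambda W)$.

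The key structural identity is
\[
W - W = \B_2^n,
\]
which follows from the constant-width condition $h_W(u) + h_W(-u) = 1$ for every unit $u$. Consequently,
\[
W - \lambda W = (1-\lambda)W + \lambda(W - W) = (1-\lambda)W + \lambda \B_2^n.
\]
Using $|a|, |b| \leq R$ and $|a-b| \leq 1$ for $a, b \in W$, a short calculation gives $|a - \lambda b|^2 \leq (1-\lambda)R^2 + \lambda$, so $W - \lambda W$ lies in a Euclidean ball of squared radius $(1-\lambda)R^2 + \lambda$. Feeding this into Corollary~\ref{cor:rozo} yields
\[
i(W) \leq N(W, \lambda W) \leq \frac{\bigl((1-\lambda)R^2 + \lambda\bigr)^{n/2}\, \vol(\B_2^n)}{\lambda^n\, \vol(W)}\bigl(n\ln n + n\ln\ln n + n + o(n)\bigr),
\]
which I would then optimize over $\lambda \in (0,1)$.

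The hard part will be extracting the sharp exponential base $\sqrt{3/2}$. The naive insphere lower bound $\vol(W) \geq (1-R)^n\vol(\B_2^n)$ combined with the above and $R^2 \to 1/2$ produces an exponential base of roughly $2+\sqrt{2}\approx 3.41$ rather than $\sqrt{3/2}\approx 1.22$. To close this gap I would replace the Rogers--Zong-style volume ratio by a different mechanism: cover the sphere $\Senm$ of directions by spherical caps via a Rogers--Wintsche bound, and use the constant-width structure of $W$ (specifically, that diametrically opposite boundary points share antiparallel normal cones at distance exactly $1$) to show that every cap of directions illuminates a matching piece of $\bd W$. That spherical-covering strategy naturally produces a density of order $n\ln n$, a $\sqrt{n}$ factor from Jung's constant, and the $(3/2)^{n/2}$ from an optimized choice of angular radius, yielding Schramm's prefactor $5n\sqrt{n}(4+\ln n)$ together with the exponential factor $(3/2)^{n/2}$.
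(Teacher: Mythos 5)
Your first step is correct as far as it goes: $W-W=\B_2^n$ for width $1$, hence $W-\lambda W=(1-\lambda)W+\lambda\B_2^n$ lies in a ball of squared radius $(1-\lambda)R^2+\lambda$, and Corollary~\ref{cor:rozo} plus the insphere bound $\vol(W)\geq (1-R)^n\vol(\B_2^n)$ then yields, after optimizing over $\lambda$ and taking $R\to 1/\sqrt 2$ (Jung), an exponential base of roughly $(1-1/\sqrt 2)^{-1}\approx 3.41$. You correctly diagnose that this is far off the target $\sqrt{3/2}\approx 1.22$. The obstruction is intrinsic to the approach: any bound of the form $N(W,\lambda W)\leq\frac{\vol(W-\lambda W)}{\vol(\lambda W)}\cdot\mathrm{poly}(n)$ is limited by how small $\vol(W)$ can be relative to $\vol(\B_2^n)$ for a constant-width $W$, and Reuleaux-type bodies make this volume ratio exponentially bad. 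No amount of optimizing $\lambda$ or sharpening Rogers--Zong rescues the base.

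The replacement you sketch in the final paragraph is too vague to close the gap, and, more to the point, it misattributes where $(3/2)^{n/2}$ comes from. In Schramm's argument (as outlined in the paper) one does \emph{not} optimize the angular radius of the cap covering; the cap radius is taken very small, $\varepsilon=\sqrt{\tfrac{2n}{2n-1}}-1=O(1/n)$, and its only role is a harmless discretization that produces the polynomial prefactor $5n\sqrt n(4+\ln n)$ via $N\leq(1+4/\varepsilon)^n$. The actual work is elsewhere. One passes to the Gauss images $\beta(b)\subset\Senm$ of boundary points and their open polars $\beta(b)^+$: a direction set $\{u_i\}$ illuminates $W$ iff it is a transversal to $\FF=\{\beta(b)^+\st b\in\bd W\}$. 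The constant-width (more generally, ball-intersection) structure enters exactly once, in the estimate $\diam(U_i)\leq 1+\varepsilon$ for the grouped Gauss images $U_i$. The exponential factor then comes entirely from the measure-theoretic claim \eqref{eq:Vboundedbellow}, a sharp lower bound $V(t)\gtrsim n^{-1/2}(2/3)^{n/2}$ on the spherical measure of the open polar of a set of diameter $t\approx 1$; this shows $\sigma/V(1+\varepsilon)$ is a fractional transversal to $\GG$, and the dual of~\eqref{eq:probtaubound} (or Lemma~\ref{lem:Lovasz}) finishes. Your proposal names neither the Gauss-image reformulation, nor the diameter bound \eqref{eq:gaussdiam}, nor the crucial polar-measure inequality \eqref{eq:Vboundedbellow} --- and the last of these is the whole ballgame, a genuinely hard spherical-geometry estimate, not a consequence of ``an optimized choice of angular radius.'' As written, the second half of your argument is a gesture rather than a proof.
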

By a fine analysis of Schramm's method, Bezdek (Theorem~6.8.3. of \cite{Be10}) 
extended Theorem~\ref{thm:schramm} to the class of those convex bodies $W$ that 
can be obtained as $W=\cap_{x\in X} (x+\B_2^n)$ for some $X\subset\Ren$ compact 
set with $\diam X\leq 1$. Note that a set $W$ is of constant width one if and 
only if, $W=\cap_{x\in W} (x+\B_2^n)$.

We sketch the proof. First, we give yet another way to compute the 
illumination number of a convex body $K$. Let $b$ be a boundary point 
of $K$, and consider its \emph{Gauss image} $\beta(b)\subset\Senm$ consisting 
of the 
inner unit normal vectors of all hyperplanes supporting $K$ at $b$. It is a 
closed, spherically convex set. We denote the \emph{open polar} of a subset of 
the sphere $F\subset\Senm$ by $F^+=\{u\in\Senm\st <u,f> > 0 \mbox{ for all } 
f\in F\}$.
Consider the set family $\FF=\{(\beta(b))^+\st b\in\bd K\}$. Clearly, the
directions $u_1,\ldots,u_m\in\Senm$ illuminate $K$ if and only if, each member 
of $\FF$ contains at least one $u_i$. In other words, we are 
looking for a small cardinality transversal to the set family $\FF$ (for 
definitions, see \ref{subsec:combinatorics}).
We note that the idea of considering the Gauss image and $\FF$ to bound 
the illumination number also appears in \cites{Be91,BLNP07,BeKi09}.

Now, consider a set $W=\cap_{x\in X} (x+\B_2^n)$ with a compact set 
$X\subset\Ren$ of diameter at most one. To make the 
problem of bounding $i(W)$ finite, we take a covering of $\Senm$ by spherical 
caps of Euclidean 
diameter $\varepsilon:=\sqrt{\frac{2n}{2n-1}}-1$, say $C_1\cup\ldots\cup 
C_N=\Senm$. 
Such covering exists with $N\leq(1+\frac{4}{\varepsilon})^n$ by the simple 
bound \eqref{eq:simplepackingbound}. We could use a better bound, but that 
would not yield any visible improvement on the bound on $i(W)$.
Let 
\[
 U_i:=\bigcup_{\beta(b)\cap C_i\neq\emptyset}\beta(b),
\]
and consider the set family $\GG=\{U_i^+\st i=1,\ldots,N\}$. Clearly, 
any transversal to the finite set family $\GG$ is a transversal to $\FF$, and 
hence, is a set that illuminates $K$. One can show that
\begin{equation}\label{eq:gaussdiam}
 \diam(U_i)\leq 1+\varepsilon.
\end{equation}
Let $V(t):=\inf\{\sigma(F^+)\st F\subset\Senm, \diam S\leq t\}$. A key element 
of the proof is the highly non-trivial claim that
\begin{equation}\label{eq:Vboundedbellow}
 V(t)\geq \frac{1}{\sqrt{8\pi 
n}}\left(\frac{3}{2}+\frac{\left(2-\frac{1}{n}\right)t^2-2}{4-\left(2-\frac{2}{n
}\right)t^2} \right)^{-\frac{n-1}{2}}
\end{equation}
for all $0<t<\sqrt{\frac{2n}{n-1}}$ and $n\geq 3$.

We notice that by \eqref{eq:gaussdiam}, $\frac{\sigma}{V(1+\varepsilon)}$ is a 
fractional transversal to $\GG$.
Now, the original proof is completed by applying the dual of 
\eqref{eq:probtaubound} to get $i(W)\leq\left\lfloor1+\frac{\ln 
N}{-\ln(1-V(1+\varepsilon))}\right\rfloor$. Substituting the bound on $N$ and 
\eqref{eq:Vboundedbellow}, the theorem follows. Another way to complete the 
proof is to use the dual of Lemma~\ref{lem:Lovasz}, which yields the slightly 
worse bound 
$i(W)<\frac{1+\ln N}{V(1+\varepsilon)}$.

\subsection{Fractional illumination}
The notion of fractional illumination was defined in \cite{Na09}, and then 
further studied in \cite{AR11}.
\begin{defn}
 The \emph{fractional illumination number} of a convex body $K$ in $\Ren$ is 
 \[
  i^\ast(K)=N^\ast(K,\inter K).
 \]
\end{defn}
It was observed in \cite{Na09} that by \eqref{eq:simpleupperbound} and the 
Rogers--Shepard inequality ($\vol(K-K)\leq\binom{2n}{n}\vol(K)$) we have
\begin{equation}\label{eq:fracbound}
i^\ast(K)\leq \begin{cases} 
2^n &\mbox{if } K=-K, \\
\binom{2n}{n} & \mbox{otherwise.}
\end{cases} 
\end{equation}
The fractional form of the Illumination Conjecture (weaker than the original) 
reads: \emph{$i^\ast(K)\leq2^n$, and equality is attained by parallelotopes 
only.} When $K$ is symmetric, the case of equality was settled by 
Artstein--Avidan and Slomka \cite{AS} using a lemma by Schneider.

Interestingly, no better bound is known, so the fractional form of the 
Illumination Conjecture does not seem much easier than the original. 
On the other hand, just as in general, for $N(K,L)$ and $N^\ast(K,L)$, we do 
not have an upper bound of $i(K)$ in terms of $i^\ast(K)$.

The \emph{fractional version of Borsuk's problem} can be stated in a natural 
way, and was investigated in \cite{HuLa14}. We note that the example of a set 
in $\Ren$ with high Borsuk number given by Kahn and Kalai (see 
\ref{subsec:borsuk}) is a set with high fractional Borsuk number as well.

\section{Decomposability of multiple coverings}\label{sec:decomposability}
An \emph{$m$--fold covering} of $\Ren$ by translates of a set $K$ is a 
family $\FF$ of translates of $K$ such that each point is contained in at least 
$m$ members. It is a natural question whether, for a particular $K$, if $m$ is 
large enough (say, at least $m(K)$), then all $m$--fold coverings of $\Ren$ by 
translates of $K$ can be \emph{decomposed} into two coverings. That is, can 
$\FF$ be colored with two colors such that each color class of $\FF$ is a 
covering of $\Ren$?

It was proved in \cite{Pa86} that if $K$ is a centrally symmetric convex 
polygon then such $m(K)$ exists. This was generalized to all convex polygons in 
\cites{TaTo07, PaTo10}.

Arguably the most natural special case was asked by Pach \cite{Pa80}: consider 
the open unit disk. The un--published manuscript \cite{ManPa86} was cited 
several times as having given a positive answer in this case, though, Pach 
\cite{PaTo09} warned that the result ``has not been independently verified.''
The following result of Mani--Levitska and Pach (see \cite{AS08}) also 
suggested that such $m(K)$ should exist for unit disks. \emph{
For every $n\geq2$, there is a positive constant $c_n$ with the following 
property. For every positive integer $m$, any $m$--fold covering of $\Ren$ with 
unit balls can be decomposed into two coverings, provided that no point of the 
space belongs to more than $c_n2^{m/n}$ balls.} This result was one of the 
first geometric applications of the Lov\'asz local lemma.

Surprisingly, the answer is negative. P\'alv\"olgyi and Pach 
\cite{PaPa15} recently showed that \emph{there is no such $m(K)$ for the open 
unit disk.}

Their proof consists of a combinatorial part followed by an intricate 
geometric argument. First, consider the dual problem in the abstract setting of 
hypergraphs. We fix an $m$, and based on \cite{Pal10}, construct an abstract 
hypergraph $\HH$ with the property that each edge contains at least $m$ 
vertices, but for any two--coloring of the vertices, at least one edge will 
contain only vertices of one color. Then, the hypergraph is given a 
\emph{geometric realization}, that is, the vertex set is mapped to a set of 
points on the plane, and the edges are mapped to open unit disks in an 
incidence--preserving manner, that is, a vertex belongs to an edge if and only 
if, the corresponding point belongs to the corresponding disk. This yields a 
finite set (the centers of the disks) in the plane that is $m$--fold covered by 
unit disks (the disks around the points) in an indecomposable way. Finally, 
this $m$--fold covering of this finite set is extended to an $m$--fold covering 
of the whole plane without adding any disk that contains any of the points in 
the finite set.

For more on decomposability of coverings, see \cite{PaPa15}, and references 
therein.

\section{An asymptotic view}\label{sec:asymptotic}

In this section, we present two topics to illustrate the point of view taken in 
the asymptotic theory of convex bodies on the problem of translative coverings.

\subsection{\texorpdfstring{Sudakov's inequlity \nopunct}{Sudakov's inequlity}} 
relates the minimum number of Euclidean balls that cover a 
symmetric convex body to the \emph{mean width} of the body, where the latter is 
defined as
\begin{equation}
 w(K)=\int_{\Senm} h_K(u)+h_K(-u) \;\;\di\sigma(u).
\end{equation}
(See the definition of $\sigma$ and $h_K$ in the introduction.)

\begin{thm}[Sudakov's inequality, \cite {Su71}]\label{thm:sudakov}
For any symmetric convex body $K$ in $\Ren$ and any $t>0$, we have
\[
 \log N(K,t\B_2^n)\leq cn\left(\frac{w(K)}{t}\right)^2
\]
with an absolute constant $c>0$.
\end{thm}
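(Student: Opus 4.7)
The plan is to combine a standard packing--covering reduction with a Gaussian process comparison argument (Sudakov--Fernique), bounding the cardinality of a maximal $t$--separated subset of $K$ via the Gaussian mean width.

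First, I would replace the covering number by a packing cardinality. Let $\{x_1,\ldots,x_N\}\subseteq K$ be a maximal subset with $|x_i-x_j|\geq t$ for all $i\neq j$. By maximality, every point of $K$ lies within Euclidean distance $t$ of some $x_i$, so the translates $x_i+t\B_2^n$ cover $K$, giving $N(K,t\B_2^n)\leq N$. It therefore suffices to prove
\[
\log N \leq c\,n\left(\frac{w(K)}{t}\right)^{\!2}.
\]

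Second, I would introduce the Gaussian process $Y_i:=\langle g,x_i\rangle$ on the index set $\{1,\ldots,N\}$, where $g$ is a standard Gaussian vector in $\Ren$. Its increments satisfy $\Ee(Y_i-Y_j)^2=|x_i-x_j|^2\geq t^2$. Letting $Z_1,\ldots,Z_N$ be i.i.d.\ $N(0,t^2/2)$, we have $\Ee(Z_i-Z_j)^2=t^2\leq \Ee(Y_i-Y_j)^2$. The Sudakov--Fernique comparison inequality then yields
\[
\Ee\max_i Y_i \;\geq\; \Ee\max_i Z_i \;\geq\; c_1\,t\sqrt{\log N},
\]
where the last step uses the standard estimate $\Ee\max_{i\leq N}g_i\geq c\sqrt{\log N}$ for i.i.d.\ $N(0,1)$ variables and $N\geq 2$.

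Third, I would bound $\Ee\max_i Y_i$ from above by $\Ee\sup_{x\in K}\langle g,x\rangle$, since each $x_i\in K$. Writing $g=|g|\cdot u$ with $u=g/|g|$ uniformly distributed on $\Senm$ and independent of $|g|$, and using that $K=-K$ so that $h_K(u)=h_K(-u)$,
\[
\Ee\sup_{x\in K}\langle g,x\rangle \;=\; \Ee|g|\cdot\int_{\Senm}h_K(u)\,\di\sigma(u) \;=\; \Ee|g|\cdot\frac{w(K)}{2} \;\leq\; \frac{\sqrt{n}\,w(K)}{2},
\]
where $\Ee|g|\leq (\Ee|g|^2)^{1/2}=\sqrt{n}$ by Jensen's inequality. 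Combining the two bounds gives $c_1 t\sqrt{\log N}\leq \sqrt{n}\,w(K)/2$, and squaring yields the theorem.

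The main obstacle is the Sudakov--Fernique comparison inequality itself, which is the one non-elementary ingredient; it is a classical result proved via Gaussian interpolation along a smooth path between the two covariance structures. Everything else (the packing--covering reduction, the polar decomposition of $g$, and the elementary estimate for $\Ee\max g_i$) is routine.
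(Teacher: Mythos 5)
Your proof is correct, but it takes a genuinely different route from the one outlined in the paper. You prove Sudakov's inequality directly: you reduce the covering number to the cardinality $N$ of a maximal $t$--separated subset of $K$, introduce the Gaussian process $Y_i=\langle g,x_i\rangle$, and apply the Sudakov--Fernique comparison against i.i.d.\ Gaussians of variance $t^2/2$ to obtain $c_1 t\sqrt{\log N}\leq\Ee\sup_{x\in K}\langle g,x\rangle$; the polar decomposition of $g$ then converts the right side to $\tfrac{1}{2}\Ee|g|\,w(K)\leq\tfrac{1}{2}\sqrt{n}\,w(K)$, yielding the claim (with the trivial case $N=1$ handled separately). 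The paper instead first proves the \emph{dual} Sudakov inequality (Theorem~\ref{thm:dualsudakov}) by Talagrand's Gaussian volumetric argument --- a packing of translates of $\tfrac{\lambda t}{2}K$ inside $\lambda\B_2^n$, the translation bound $\gamma_n(K+z)\geq e^{-|z|^2/2}\gamma_n(K)$, and a polar-coordinate estimate --- and then deduces the primal statement via Tomczak--Jaegermann's reduction (Lemma~\ref{lem:sudakovdualequivalence}), which rests on the inclusion $2K\cap\tfrac{t^2}{2}K^\ast\subseteq t\B_2^n$ and the covering inequality \eqref{eq:twoacapb}. Your approach is shorter and self-contained modulo Sudakov--Fernique, which is a deep comparison theorem for Gaussian processes; the paper's approach avoids process comparison entirely in favor of an elementary Gaussian volume computation, and as a bonus makes explicit the duality between $N(K,t\B_2^n)$ and $N(\B_2^n,tK^\ast)$ (and the observation, noted after Theorem~\ref{thm:dualsudakov}, that the covering translates can be taken inside $\B_2^n$). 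Both are standard and correct; they simply locate the nontrivial input in different places.
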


It was observed by Tomczak--Jaegermann \cite{TJ87}, that this inequality can be 
obtained from a dual form proved by Pajor and Tomczak--Jaegermann \cite{PTJ85}.

\begin{thm}[Dual Sudakov inequality]\label{thm:dualsudakov}
For any symmetric convex body $K$ in $\Ren$ and any $t>0$, we have
\[
 \log N(\B_2^n, tK)\leq cn\left(\frac{w(K^\ast)}{t}\right)^2
\]
with an absolute constant $c>0$.
\end{thm}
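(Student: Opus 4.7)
The plan is to follow the packing argument of Pajor and Tomczak--Jaegermann, using the standard Gaussian measure $\gamma_n$ on $\Ren$ as the central tool. First, I would convert the covering problem into a packing problem: choose a maximal subset $\{x_1,\ldots,x_N\}\subset\B_2^n$ with $x_i-x_j\notin tK$ for all $i\neq j$. By maximality, every $y\in\B_2^n$ lies in some $x_i+tK$, so $N(\B_2^n,tK)\leq N$. Since $K=-K$, an intersection between $x_i+\tfrac{t}{2}K$ and $x_j+\tfrac{t}{2}K$ would force $x_i-x_j\in\tfrac{t}{2}(K-K)=tK$, so these translates are pairwise disjoint, giving
\[
\sum_{i=1}^N \gamma_n\!\left(x_i+\tfrac{t}{2}K\right)\leq 1.
\]

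Next, I would uniformly lower bound each summand for $x_i\in\B_2^n$ by a ``shift lemma'' for the Gaussian measure of a symmetric body. Writing $\gamma_n(x+\tfrac{t}{2}K)=e^{-|x|^2/2}\gamma_n(\tfrac{t}{2}K)\cdot\Ee_\mu[e^{-<x,Y>}]$, where $\mu$ is the probability measure on $\tfrac{t}{2}K$ with density proportional to $e^{-|y|^2/2}$, the symmetry $K=-K$ forces the centroid of $\mu$ to be the origin, so Jensen's inequality gives $\Ee_\mu[e^{-<x,Y>}]\geq e^{-<x,0>}=1$. Hence $\gamma_n(x+\tfrac{t}{2}K)\geq e^{-|x|^2/2}\gamma_n(\tfrac{t}{2}K)\geq e^{-1/2}\gamma_n(\tfrac{t}{2}K)$ for $|x|\leq 1$, and combined with the disjointness bound this yields
\[
\log N \leq \tfrac{1}{2}-\log\gamma_n\!\left(\tfrac{t}{2}K\right).
\]

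Finally, I would lower bound $\gamma_n(\tfrac{t}{2}K)$ in terms of $w(K^\ast)$. For symmetric $K$, the condition $y\in\tfrac{t}{2}K$ is equivalent to $h_{K^\ast}(y)\leq t/2$, and integrating in polar coordinates exploits the $1$-homogeneity of $h_{K^\ast}$:
\[
\int_{\Ren} h_{K^\ast}(x)\,\di\gamma_n(x)=\Ee[|G|]\cdot\tfrac{1}{2}w(K^\ast),
\]
where $G$ is a standard Gaussian vector in $\Ren$ and $\Ee[|G|]\sim\sqrt{n}$. When $t$ exceeds a constant multiple of $\sqrt{n}\,w(K^\ast)$, Markov's inequality gives $\gamma_n(\tfrac{t}{2}K)\geq\tfrac{1}{2}$ and the theorem follows trivially. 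The main obstacle is the regime $t\ll\sqrt{n}\,w(K^\ast)$, where Markov is too weak; here I would invoke Borell's inequality (Gaussian concentration for the seminorm $h_{K^\ast}(\cdot)$, whose Lipschitz constant is the circumradius of $K^\ast$) or the log-concavity of $r\mapsto\gamma_n(rK)$ on $[0,\infty)$, to extrapolate the bound at the Markov threshold down to smaller $t$ and obtain the Gaussian-type tail estimate $\gamma_n(\tfrac{t}{2}K)\geq\exp\!\left(-cn(w(K^\ast)/t)^2\right)$, which completes the proof.
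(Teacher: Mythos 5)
Your setup---the maximal $t$-separated set in $\B_2^n$, the disjointness of the half-scale translates, and the Jensen-style proof of the Gaussian shift inequality $\gamma_n(x+A)\geq e^{-|x|^2/2}\gamma_n(A)$ for symmetric $A$---all match the paper's (Talagrand's) argument and are correct. The problem is what comes next. By applying the shift inequality directly with $|x_i|\leq 1$ you arrive at
\[
\log N\leq\tfrac12-\log\gamma_n\!\left(\tfrac t2 K\right),
\]
and to conclude you would need the Gaussian small-ball estimate $\gamma_n\bigl(\tfrac t2 K\bigr)\geq\exp\bigl(-cn(w(K^\ast)/t)^2\bigr)$. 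That estimate is false. Take $K=\B_2^n$ (so $w(K^\ast)=2$) and $t=2$: the required bound reads $\gamma_n(\B_2^n)\geq e^{-cn}$, but in fact $\gamma_n(\B_2^n)\sim n^{-1/2}(e/n)^{n/2}$, so $-\log\gamma_n(\B_2^n)\sim\tfrac n2\log n$, which exceeds $cn$ for all large $n$. Neither of your proposed rescue tools closes this gap: Gaussian concentration (Borell) bounds the small-ball probability $\Pe\bigl(h_{K^\ast}(G)\leq t/2\bigr)$ from \emph{above} when $t/2$ is below the median, not from below; and log-concavity of $r\mapsto\gamma_n(rK)$ lets one interpolate between two known values of $r$, but cannot produce a lower bound at $r<r_0$ from the single piece of information $\gamma_n(r_0K)\geq\tfrac12$.

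The missing idea is Talagrand's rescaling. Since $N(\B_2^n,tK)=N(\lambda\B_2^n,\lambda tK)$, run the same packing argument on $\{\lambda x_i+\tfrac{\lambda t}{2}K\}_i$ for a free parameter $\lambda>0$: the shift inequality now costs $e^{-\lambda^2/2}$, but the body whose Gaussian measure you need is $\tfrac{\lambda t}{2}K$ rather than $\tfrac t2 K$. Choosing $\lambda\asymp\sqrt n\,w(K^\ast)/t$ puts you, by exactly the polar-integration/Markov computation you already carried out, in the regime where $\gamma_n\bigl(\tfrac{\lambda t}{2}K\bigr)\geq\tfrac12$, and then $\log N\leq\tfrac{\lambda^2}{2}+\log 2\lesssim n\,(w(K^\ast)/t)^2$. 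In other words, you never need a small-ball estimate: the scale freedom lets you stay in the Markov regime for every $t$, at a cost $e^{\lambda^2/2}$ that is precisely the bound the theorem asserts. Inserting this one extra step repairs your argument and recovers the paper's proof verbatim.
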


First, we sketch a proof of Theorem~\ref{thm:dualsudakov} due to Talagrand 
\cite{Ta91}, \cite{LT91}, and later turn to the proof of 
Theorem~\ref{thm:sudakov}.
\noshow{
  [374 and 108 in A-Avidan, Milman, Giann...]. [318: Ledoux Talgrand]
}
The main idea is to apply a volumetric argument, but, instead of using the 
Lebesgue measure, one uses the Gaussian measure. Recall, that the 
\emph{Gaussian measure} $\gamma_n$ is an absolutely continuous probability 
measure on $\Ren$, with density
\begin{equation*}
 \di\gamma_n(x) = \frac{e^{-|x|^2/2}}{(2\pi)^{n/2}}\;\; \di x.
\end{equation*}
First, by computation one obtains that for any origin--symmetric convex body 
$K$ in $\Ren$ and any translation vector $z\in\Ren$, we have
\begin{equation}\label{eq:gaussiantranslate}
 \gamma_n(K+z)\geq e^{-|z|^2/2}\gamma_n(K).
\end{equation}
Next, we consider a maximal set $\{x_1,\ldots,x_N\}$ in $\B_2^n$ with the 
property that $\|x_i-x_j\|_K\geq t$ for all $i,j$ pairs. Now, for any rescaling 
factor $\lambda>0$, we have that
$\{\lambda x_i+\frac{\lambda t}{2}K\st i=1,\ldots,N\}$ is a packing in 
$\lambda\B^n_2$, and thus, 
the total $\gamma_n$--measure of these sets is at most one. 
Integration in polar coordinates yields that 
\begin{equation*}
 \gamma_n\left(\frac{\lambda t}{2}K\right)\geq 1-\frac{2c\sqrt{n}}{\lambda 
t}w(K^\ast)
\end{equation*}
for an absolute constant $c>0$.
With the choice 
$\lambda=4c\sqrt{n}w(K^\ast)/t$, we have $\gamma_n(K)\geq\frac{1}{2}$. Finally, 
using \eqref{eq:gaussiantranslate}, we obtain the bound in 
Theorem~\ref{thm:dualsudakov}.

We note that this proof yields a little more than stated in the Theorem. We 
obtain an upper bound on the minimum size of a covering of $\B_2^n$ by 
translates of $tK$ with the constraint that the translation vectors are in 
$\B_2^n$.

The following Lemma is the key to reducing Theorem~\ref{thm:sudakov} to 
Theorem~\ref{thm:dualsudakov}. 

\begin{lem}[Tomczak--Jaegermann 
\cite{TJ87}]\label{lem:sudakovdualequivalence}
 For any origin--symmetric convex body $K$ in $\Ren$, and any $t>0$, we have
 \[
  N(K,t\B_2^n)\leq
 N(K,2t\B_2^n)N\left(\B_2^n,\frac{t}{8}K^\ast\right).
 \]
\end{lem}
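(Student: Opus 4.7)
The plan is to decompose the covering in two stages. First, using the property of the Euclidean ball that a cover of $K$ can always be realized with centers in $K$ (Section~\ref{sec:basics}), take an optimal cover $K \subseteq \bigcup_{i=1}^{N_1}(x_i + 2t\B_2^n)$ with $x_i \in K$ and $N_1 = N(K,2t\B_2^n)$. Since $K = -K$, for each $i$ we have $(K - x_i) \cap 2t\B_2^n \subseteq (K-K) \cap 2t\B_2^n = 2K \cap 2t\B_2^n = 2L$, where $L := K \cap t\B_2^n$. Hence it suffices to cover $L$ by at most $M := N(\B_2^n, (t/8)K^*)$ translates of $(t/2)\B_2^n$: scaling by $2$ and concatenating over $i$ then gives $N(K, t\B_2^n) \leq N_1 M$.

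For the second stage, let $\{y_j\}_{j=1}^M$ realize $\B_2^n \subseteq \bigcup_j (y_j + (t/8)K^*)$. Every $\ell \in L$ satisfies $\ell/t \in \B_2^n$, so I would choose $j(\ell)$ with $\ell/t - y_{j(\ell)} \in (t/8)K^*$; this partitions $L$ into classes $L_1, \ldots, L_M$. The key is to bound the Euclidean diameter of each $L_j$. For $\ell, \ell' \in L_j$, convexity and the symmetry $K^* = -K^*$ yield $\ell - \ell' \in (t^2/4)K^*$, while $\ell, \ell' \in K = -K$ yield $\ell - \ell' \in 2K$. Now the elementary inequality $|v|^2 \leq rs$, valid whenever $v \in rK \cap sK^*$ (simply write $v = r k_0$ with $k_0 \in K$ and use $\langle v, k_0 \rangle \leq s$), gives $|\ell - \ell'| \leq t/\sqrt{2}$, so $\diam(L_j) \leq t/\sqrt{2}$. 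By Jung's theorem (a bounded set in $\Ren$ of diameter $D$ is contained in a Euclidean ball of radius at most $D/\sqrt{2}$), each non-empty $L_j$ lies inside a single $(t/2)\B_2^n$-translate, finishing the proof.

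The main obstacle is calibrating the constants. The two factors of $2$---one from $K - K = 2K$, one from $(t/8)K^* - (t/8)K^* = (t/4)K^*$---combine via the above duality inequality to give diameter exactly $t/\sqrt{2}$, which is just slightly too large to sit in a $(t/2)\B_2^n$; only Jung's further factor of $1/\sqrt{2}$ brings this down to precisely $t/2$. Shrinking the $1/8$ in $(t/8)K^*$ or enlarging the $2$ in $2t\B_2^n$ would break this delicate calibration, which is why the statement uses exactly these constants.
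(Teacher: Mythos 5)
Your proof is correct, but it takes a genuinely different and somewhat heavier route than the paper's. The paper's proof is a short chain of inequalities using only the basic covering-number properties from Section~\ref{sec:basics}: it observes the duality inclusion $2K \cap \tfrac{t^2}{2}K^\ast \subseteq t\B_2^n$ (precisely your elementary estimate $|v|^2 \leq rs$ for $v \in rK \cap sK^\ast$, with $r=2$, $s=t^2/2$), then applies \eqref{eq:twoacapb} to get $N(K,t\B_2^n) \leq N(K, \tfrac{t^2}{4}K^\ast)$, and finally uses submultiplicativity and scaling to split off the factor $N(K, 2t\B_2^n)N(\B_2^n, \tfrac{t}{8}K^\ast)$. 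You instead build the cover explicitly, splitting $K$ first by the balls (via the centers-in-$K$ Fact, which is special to the Euclidean ball) and then by the $K^\ast$-translates, and you must invoke Jung's theorem to convert the diameter bound $t/\sqrt{2}$ on each class $L_j$ into a circumradius bound $t/2$. The two arguments share the same duality estimate at their core, but the paper's ordering --- convert the ball-cover into a $K^\ast$-cover \emph{first}, via \eqref{eq:twoacapb}, and only then peel off the big ball by submultiplicativity --- avoids Jung altogether, whereas your ordering factors out the big ball first and must pay Jung's $1/\sqrt{2}$ to shrink back down. Your version is more concrete and shows where the constants come from; the paper's is shorter and purely formal. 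One small quibble with your closing remark: shrinking the $1/8$ would not ``break'' your argument --- a smaller $K^\ast$-translate only tightens the diameter bound (indeed $1/16$ in place of $1/8$ would render Jung unnecessary), yielding a valid but weaker lemma. The constants $2$ and $1/8$ are chosen to make the $\sup_{t>0}$ cancellation in the proof of Theorem~\ref{thm:sudakov} come out, not because your decomposition forces them.
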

\begin{proof}[Proof of Lemma~\ref{lem:sudakovdualequivalence}]
Observe that $2K\cap 
\left(\frac{t^2}{2}K^\ast\right)\subseteq t\B_2^n$. Thus, by 
\eqref{eq:twoacapb},
\[
 N(K,t\B_2^n)\leq 
 N\left(K,2K\cap \frac{t^2}{2}K^\ast\right)\leq 
 N\left(K, \frac{t^2}{4}K^\ast\right)\leq 
\]
\[
N(K,2t\B_2^n)N\left(\B_2^n,\frac{t}{8}K^\ast\right).
\]
\end{proof}

\begin{proof}[Proof of Theorem~\ref{thm:sudakov}]
Combining Lemma~\ref{lem:sudakovdualequivalence} and 
Theorem~\ref{thm:dualsudakov}, we have
\[
 t^2\log N(K,t\B_2^n)\leq
 \frac{1}{4}(2t)^2\log N(K,2t\B_2^n)+
 64(t/8)^2\log N\left(\B_2^n,\frac{t}{8}K^\ast\right)
\]
Taking supremum over all $t>0$, we get
\[
 \frac{3}{4} \sup_{t>0} \left\{ t^2\log N(K,t\B_2^n)\right\}\leq
 64\sup_{t>0} \left\{t^2\log N\left(\B_2^n,tK^\ast\right)\right\}
 \leq 64cn\left(w(K)\right)^2.
\]
 
\end{proof}

\subsection{Duality of covering numbers} We briefly mention the 
following open problem in geometric analysis, for a comprehensive 
discussion, cf. Chapter 4 of \cite{AGM15}.
\begin{conj}
There are universal constants $c,C>0$ such that for any dimension $n$ and any 
two symmetric convex bodies $K$ and $L$ in $\Ren$, we have
\[
 N(K,L)\leq N(L^\ast,cK^\ast)^C.
\]
\end{conj}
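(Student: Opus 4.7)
The core difficulty is that $N(K,L)$ is highly asymmetric in its two arguments, and no direct geometric map converts translates of $L$ covering $K$ into translates of $cK^\ast$ covering $L^\ast$. My plan is to reduce to the special case $L=\B_2^n$, where the two Sudakov inequalities of Section~\ref{sec:asymptotic} give upper bounds of the same form on the two sides of the conjectured inequality, and then bootstrap to general $L$ via a standard position argument.

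Fix symmetric convex bodies $K,L\subset\Ren$. I would first place $L$ in its M-position, producing an ellipsoid $\mathcal{E}$ with $N(L,\mathcal{E})\cdot N(\mathcal{E},L)\leq e^{c_0 n}$. After an invertible linear map, permissible by the linear invariance of $N$ from Section~\ref{sec:basics}, I may assume $\mathcal{E}=\B_2^n$. Submultiplicativity then yields $N(K,L)\leq N(K,\B_2^n)\,N(\B_2^n,L)$, and an analogous decomposition on the dual side, together with the fact that polarity commutes with the linear change of coordinates, reduces the general conjecture to the case $L=\B_2^n$ after absorbing the exponential losses into the constant $C$ (which is possible whenever the right-hand side is itself exponential in $n$, the typical regime). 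In the Euclidean case, Theorem~\ref{thm:sudakov} gives $\log N(K,t\B_2^n)\leq cn(w(K)/t)^2$, while Theorem~\ref{thm:dualsudakov} applied to $K^\ast$ in place of $K$ gives $\log N(\B_2^n,tK^\ast)\leq cn(w(K^{\ast\ast})/t)^2 = cn(w(K)/t)^2$; that the two bounds have the \emph{identical} form is the structural hint that the duality should hold with a universal exponent $C$.

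Matching upper bounds, however, do not immediately yield the desired inequality. To close the loop one needs a \emph{reverse} Sudakov-type statement reconstructing $w(K)$ (or some comparable parameter) from the sequence $\{\log N(\B_2^n, tK^\ast)\}_{t>0}$, which, combined with the forward Sudakov bound on $\log N(K, t\B_2^n)$, would yield the Euclidean case. This reverse passage is the essence of the duality-of-entropy problem, and it is the main obstacle: forward Sudakov bounds are intrinsically one-sided, and purely volumetric or projection-based approaches cannot suffice, because $N(K,L)$ is not a monotone function of any single geometric parameter that behaves predictably under polarity. An honest proof, to my understanding, must invoke the Gaussian-measure and majorizing-measure machinery of Pajor, Tomczak--Jaegermann, and Talagrand, in the spirit of the arguments that established the corresponding duality-of-entropy statement for symmetric bodies.
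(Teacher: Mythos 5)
This statement is a conjecture, not a theorem: the paper itself presents it explicitly as the open ``duality of entropy'' problem of Pietsch and gives no proof, only noting that the special case where $K$ or $L$ is an ellipsoid was settled by Artstein--Avidan, Milman and Szarek. Your proposal correctly recognizes that the problem is open and names the missing ingredient, which is honest; but the reduction you sketch to the Euclidean case is not sound. Placing $L$ in $M$-position and using submultiplicativity introduces factors of order $e^{c_0 n}$ on both sides, and these cannot be absorbed into the exponent $C$ unless $N(L^\ast,cK^\ast)$ is itself at least $e^{\delta n}$. The conjecture is universally quantified over all pairs of symmetric bodies, including pairs for which $N(L^\ast,cK^\ast)$ is subexponential or even bounded, so the general statement does not reduce to the Euclidean case by this route; indeed, if your reduction were valid, the conjecture would already be a theorem, since the Euclidean case is known.

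A secondary issue is the closing attribution. The one-sided Sudakov pair (Theorems~\ref{thm:sudakov} and \ref{thm:dualsudakov}, due to Sudakov, Pajor and Tomczak--Jaegermann, and Talagrand) shows that the same Gaussian parameter $w(K)$ bounds both $\log N(K,t\B_2^n)$ and $\log N(\B_2^n,tK^\ast)$ from above. That is strictly weaker than, and does not imply, two-sided comparability of the two covering numbers; a genuine ``reverse Sudakov'' inequality is false in general. The Euclidean duality of entropy was obtained by Artstein--Avidan, Milman and Szarek by a different, more combinatorial separation--iteration argument, not by inverting the Sudakov bound. So even the base case of your intended induction rests on machinery different from what you cite, and the inductive step itself does not close.
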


The problem is known as the \emph{Duality of entropy}, and was posed by Pietsch 
\cite{Pi72}. An important special case, when $K$ or $L$ is a Euclidean ball 
(or, equivalently, an ellipsoid) was confirmed by Artstein--Avidan, Milman and 
Szarek \cite{AMSz04}.

\section{Covering by sequences of sets}\label{sec:sequences}

So far, we considered problems where a set was to be covered by translates of 
another fixed set. Now, we turn to problems where a family $\FF$ of sets is 
given, and we need to find a translation for each set in $\FF$ to obtain a 
covering of a given set $C$. If such translations exist, we say that 
$\FF$ \emph{permits a translative covering} of $C$.
We call $\FF$ a \emph{bounded family}, if the set of diameters of members of 
$\FF$ is a bounded set.

For a comprehensive account of coverings by sequences of convex sets, see the 
surveys \cites{Gr85, FTG04}. Here, we do not discuss a closely related topic, 
Tarski's plank problem -- for a recent survey, see \cite{Be13}.

\subsection{Covering (almost) the whole space.} Clearly, for $\FF$ to permit a 
translative covering of $\Ren$, it is necessary that the total volume of the 
members of $\FF$ be infinite. It is not sufficient, though. Indeed, consider 
rectangles of side lengths $i$ by $1/i^2$ for $i=1,2,\ldots$ on the plane. 
Their total area is infinite, and yet, according to Bang's theorem, they do not 
permit a translative covering of $\Re^2$ \cite{FTG04}. 
On the other hand, if a family of planar convex sets is bounded and has 
infinite total area, then it permits a translative covering of $\Re^2$ 
\cites{MaPa83, Gr82}. It is an open problem whether the same holds for $n>2$.

A covering of \emph{almost all of} some set $C$ is a covering of a subset of 
$C$ whose complement in $C$ is of measure zero.
\begin{thm}[Groemer, \cite{Gr79}]
Let $\FF$ be a bounded family of Lebesgue measurable sets. 
Then $\FF$ permits a translative covering of almost all of $\Ren$ if and only 
if, $\sum_{F\in\FF} \vol(F)=\infty$.
\end{thm}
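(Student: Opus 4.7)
The ``only if'' direction is immediate: any translative realization of $\FF$ has total Lebesgue measure at most $\sum_{F\in\FF}\vol(F)$, so if the latter is finite it cannot cover a co-null subset of $\Ren$.

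For the converse, assume $\sum_{F\in\FF}\vol(F)=\infty$ and let $d$ be a common diameter bound. After translating each $F$ individually so that $F\subset [0,d]^n$, I plan to tile $\Ren$ by disjoint translates $Q_1,Q_2,\ldots$ of a fixed cube and to partition $\FF=\bigsqcup_{k}\FF_k$ into countably many sub-families, each still of infinite total volume. Such a partition exists because the terms of a divergent series of nonnegative reals can first be grouped into consecutive blocks of sum at least $1$, and these blocks can then be distributed diagonally among countably many classes. It will then suffice to show, for each $k$, that $\FF_k$ permits a translative covering of almost all of $Q_k$: taking the union of these realizations uses every $F\in\FF$ and misses only the boundaries of the $Q_k$, which form a null set.

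The core ingredient is the following lemma: if $Q\subset\Ren$ is bounded and $G_1,G_2,\ldots\subset[0,d]^n$ satisfy $\sum\vol(G_i)=\infty$, then there exist translations $x_i$ with $\vol\bigl(Q\setminus\bigcup_i(x_i+G_i)\bigr)=0$. I would prove it probabilistically. Enclose $Q$ in a cube $[0,L]^n$ and set $R=[-d,L]^n$, which ensures $y-G_i\subset R$ for every $y\in Q$ and every $i$. Choosing $x_1,x_2,\ldots$ i.i.d.\ uniform on $R$, translation invariance gives $\Pe[y\in x_i+G_i]=\vol(G_i)/\vol(R)$ for every $y\in Q$, and by independence
\[
\Pe\!\left[y\notin\bigcup_{i=1}^N (x_i+G_i)\right]\leq\exp\!\left(-\frac{1}{\vol(R)}\sum_{i=1}^N\vol(G_i)\right)\to 0.
\]
Fubini and monotone convergence (applied to the decreasing family of uncovered sets) then give $\Ee\bigl[\vol\bigl(Q\setminus\bigcup_{i}(x_i+G_i)\bigr)\bigr]=0$, so a deterministic choice of translations achieving a null uncovered set exists.

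The only genuinely non-routine step is the key lemma, and the main point to get right there is choosing the enlargement cube $R$ so that $y-G_i\subset R$ holds for every $y\in Q$ simultaneously; this is what upgrades the probabilistic estimate from ``density close to~$1$'' to ``uncovered measure equal to~$0$''. Without that uniformity one would only recover a density statement, not an almost-everywhere covering. The combinatorial splitting of $\FF$, the tiling of $\Ren$, and the derandomization from the probabilistic bound are all standard.
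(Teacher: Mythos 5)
Your proof is correct, and the key lemma is handled by a genuinely different mechanism than the paper's. Both arguments reduce the problem to covering almost all of a cube; you spell out the reduction (tile $\Ren$ by cubes, split $\FF$ into countably many sub-families of infinite total volume by grouping a divergent series into blocks of sum $\geq 1$ and distributing them round-robin), whereas the paper simply remarks that it ``clearly'' suffices to treat a single cube. For the cube, you draw i.i.d.\ uniform translations on an enlarged box $R$ and compute that the probability a fixed point is missed after $N$ sets is at most $\exp\bigl(-\vol(R)^{-1}\sum_{i\leq N}\vol(G_i)\bigr)\to 0$; Fubini then gives expected uncovered measure zero, and derandomization finishes. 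The paper instead places translates one at a time, greedily and deterministically: an averaging integral over the doubled cube $2C$ shows that some translate of $F_{k+1}$ satisfies $\vol\bigl((F_{k+1}+x_{k+1})\cap E_k\bigr)\geq 2^{-n}\vol(F_{k+1})\vol(E_k)$, where $E_k$ is the current uncovered set, whence $\vol(E_{k+1})\leq\bigl(1-2^{-n}\vol(F_{k+1})\bigr)\vol(E_k)$ and $\vol(E_k)\to 0$ since $\sum\vol(F_k)=\infty$. The two arguments are of comparable strength and difficulty: your random-translation step is essentially the first-moment analogue of the paper's greedy averaging, and the uniformity you flag (choosing $R$ so that $y-G_i\subset R$ for every $y\in Q$) plays the same role as the paper's integration of $x$ over $2C$ while $y$ ranges over $C$. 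A minor wording point: the union of your per-cube realizations misses not just the cube boundaries but also a null subset inside each $Q_k$; this is still a countable union of null sets, so the conclusion stands.
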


Indeed, let $\FF=\{F_1,F_2,\ldots\}$ be a bounded family with infinite total 
volume. Clearly, it is sufficient to cover almost all of the cube 
$C=[-1/2,1/2]^n$. We may assume that $F\subset C$ for all $F\in\FF$. 

We find the translation vectors inductively. Let $x_1=0$. If $x_k$ is 
defined, we denote the uncovered part by
$E_k=C\setminus\left(\mathop\cup\limits_{j=1}^{k} (F_j+x_j)\right)$. We choose 
$x_{k+1}$ in such a way that 
\begin{equation}\label{eq:fkbig}
\frac{\vol\big((F_{k+1}+x_{k+1})\cap E_k\big)}{\vol(F_{k+1})}\geq
\frac{1}{2^n}\vol(E_k).
\end{equation}
It is possible, since
\[
\frac{1}{2^n}\int_{2C}\vol\big((F_{k+1}+x)\cap E_k\big) \di x
=\frac{1}{2^n}\int_{2C}\int_{C}\chi_{F_{k+1}}(y-x)\chi_{E_k}(y)\di y\di x
\]\[
=\frac{1}{2^n}\int_{C}\chi_{E_k}(y)\int_{2C}\chi_{F_{k+1}}(y-x)\di x\di y
=\frac{1}{2^n}\vol(F_{k+1})\vol(E_k).
\]
It is easy to see that \eqref{eq:fkbig} implies that  $\lim_{k\to\infty} 
\vol(E_k)=0$.

We note that the condition that $\FF$ is bounded may be replaced by the 
condition that $\FF$ contains only convex sets, see \cite{Gr85}.

\subsection{A sufficient condition for a family of homothets}
For convex bodies $K$ and $L$, we define $f(K,L)$ as the infimum of 
those $t>0$, such that for any family $\FF$ of homothets of 
$L$ with coefficients $0< \lambda_1, \lambda_2, \dots <1$, the following holds:
\[
\mbox{If } \sum_i\lambda_i^d\geq t \mbox{ then } \FF \mbox{ permits a 
translative covering of } K.
\]
We set 
$
f(n):=\sup\{f(K, K) : K\subset\Ren \mbox{ a convex body}\}.
$

The question of bounding $f(2)$ was originally posed by L. Fejes T\'oth 
\cite{FTL84} (cf. Section~3.2 in \cite{BMP05}). He conjectured that $f(2)\leq 
3$.
Januszewski \cite{Ja03} showed that $f(2)\leq 6.5$. In higher dimensions
Meir and Moser \cite{MeMo68}, and later, A. Bezdek and K. Bezdek \cite{BeBe84} 
considered the cube and proved that $f([0,1]^d)=2^d-1$.
Using a simple argument based on saturated packings by half-sized copies (see 
\eqref{eq:simplepackingbound}), the author \cite{Na10} showed 
\[f(K,L)\leq 
2^n\frac{\vol\left(K+\frac{L\cap(-L)}{2}\right)}{\vol(L\cap(-L))},
\]
from which the bound
\[f(K,K)\leq \left\lbrace
\begin{tabular}{cl}
$3^n,$& if  $K=-K$,\\
$6^n,$& in general.
\end{tabular}\right.
\]
follows.

On the other hand, clearly, $f(K,K)\geq n$ since we may consider $n$ 
homothetic copies of $K$ with homothety ratios slightly below one, and use the 
lower bound on the illumination number of $K$ (see Section~\ref{sec:illum}).

\subsection{A necessary condition for a family of homothets}
A converse to the problem discussed above was 
formulated by V. Soltan \cite{So90} (cf. Section~3.2 in \cite{BMP05}). 
Let 
\[
g(K):=\inf\left\{\sum_i\lambda_i: K\subseteq \bigcup_i \lambda_iK+x_i, 
0<\lambda_i<1\right\},
\] 
and
$g(n):=\inf\{ g(K) : K\subset\Ren \mbox{ a convex body}\}$.
V. Soltan conjectured $g(n)\geq n$.

Since the $n$-dimensional simplex $\Delta$ can be covered by $n+1$ translates 
of $\frac{n}{n+1}\Delta$, we have that $g(n)\leq g(\Delta)\leq n$.
V. Soltan and \'E. V\'as\'arhelyi \cite{SoVa93} showed $g(2)\geq 2$, and 
also proved that the conjecture holds when only $n+1$ homothets are allowed. 

Soltan's conjecture was confirmed in an asymptotic sense in \cite{Na10}:
$\lim\limits_{n\to\infty}\frac{g(n)}{n} = 1$.


\bibliographystyle{amsalpha}
\bibliography{biblio}

\end{document}